\documentclass[preprint,11pt]{paper}
\usepackage[T1]{fontenc}
\usepackage[latin1]{inputenc}
\usepackage{ucs}
\usepackage[english]{babel}
\usepackage{multicol}
\usepackage{amssymb}
\usepackage{amsmath}
\usepackage{float}
\usepackage{theorem}
\usepackage{graphicx, epsfig}
\usepackage{verbatim}
\usepackage{bbm}

\usepackage[official,right]{eurosym}

\oddsidemargin -0cm \evensidemargin -0cm 
\topmargin -2cm
\textheight 24.3cm
\textwidth 16.5cm

\hfuzz5pt 
%
 

\makeatletter
\DeclareRobustCommand{\qed}{%
  \ifmmode 
  \else \leavevmode\unskip\penalty9999 \hbox{}\nobreak\hfill
  \fi
  \quad\hbox{\qedsymbol}}
\newcommand{\openbox}{\leavevmode
  \hbox to.77778em{%
  \hfil\vrule
  \vbox to.675em{\hrule width.6em\vfil\hrule}%
  \vrule\hfil}}
\newcommand{\qedsymbol}{\openbox}
\newenvironment{proof}[1][Proof]{\par
  \normalfont
  \topsep6\p@\@plus6\p@ \trivlist
  \item[\hskip\labelsep\bfseries
    #1\@addpunct{.}]\ignorespaces
}{%
  \qed\endtrivlist
}
\makeatother

\newtheorem{prop}{Proposition}
\newtheorem{lem}{Lemma}

{\theorembodyfont{\rmfamily} \newtheorem{rem}{Remark}}


\newcommand{\id}[1]{\ensuremath{\mathbbm{1}_{#1}}}
\newcommand{\bc}{\begin{center}}
\newcommand{\ec}{\end{center}}
\newcommand{\1}{\mathbf{1}}

\newcommand{\E}{\mathbb{E}}

\renewcommand{\P}{\mathbb{P}}

\newcommand{\Q}{\mathbb{Q}}
\newcommand{\R}{\mathbb{R}}

\newcommand{\cC}{\mathcal{C}}

\newcommand{\cF}{\mathcal{F}}

\newcommand{\Wsd}{\mathbb{W}^{SD}}

\newcommand{\pare}[1]{\left ( #1 \right )}

\def \defi{:=}


\begin{document}

\begin{frontmatter}



\title{Exact simulation of one-dimensional stochastic differential equations involving the local time at zero of the unknown process}


\author[pe]{Pierre \'Etor\'e\corref{cor1}\fnref{fn1}}
\ead{pierre.etore@imag.fr }
\author[mm]{Miguel Martinez}
\ead{miguel.martinez@univ-mlv.fr}

\address[pe]{Laboratoire Jean Kuntzmann- Tour IRMA 51, rue des Math\'ematiques, 38041 Grenoble Cedex 9, France. }
\address[mm]{Universit\'e Paris-Est, Laboratoire d'Analyse et de Math\'ematiques Appliqu\'ees, UMR $8050$, 5  Bld Descartes, Champs-sur-marne, 77454 Marne-la-Vall\'ee Cedex 2, France. }

\cortext[cor1]{Corresponding author}
\fntext[fn1]{ Phone: + 33 (0)4 76 51 45 57; Fax: +33 (0)4 76 63 12 63 }

\begin{abstract}
In this article we extend the exact simulation methods of Beskos et al. in \cite{beskos2} to the solutions of one-dimensional stochastic differential equations involving the local time of the unknown process at point zero. In order to perform the method we compute the law of the skew Brownian motion with drift.  The method presented in this article covers the case where the solution of the SDE with local time corresponds to a divergence form operator with a discontinuous coefficient at zero. Numerical examples are shown to illustrate the method and the performances are compared with more traditional discretization schemes.
\end{abstract}

\begin{keyword}

Exact simulation methods ; Skew Brownian motion ; One-dimensional diffusion ; Local Time.
\end{keyword}

\end{frontmatter}

\section{Introduction}
		\subsection{Presentation}
		
		Exact simulation methods for trajectories of one-dimensional SDEs has been a subject of much interest in the last years : see for example \cite{beskos1}, \cite{beskos2}, \cite{beskos5}, \cite{tanre1}, \cite{sbai}. Unlike the classical simulation methods
which all involve some kind of discretization error (we mention \cite{ball:tala:96:1} for the Euler Scheme), the exact simulation methods are constructed in such a way that they do not present any discretization error under the strong hypothesis that the diffusion coefficient is constant and equal to one. In the last years, the original method presented in the
fundamental article  \cite{beskos2} has been extended to overcome various limitations of the initial algorithm ; it has been 
generalized to include the cases of unbounded drifts (\cite{beskos5}, \cite{beskos4}).

On another hand, the numerical simulation of SDEs corresponding to divergence form operators
$$L=\nabla.\pare{\frac{1}{2}a(x)\nabla}$$ involving a discontinuous coefficient $a$ has been also the subject of various studies in the last years. Indeed these operators are of great importance since they appear in a wide range of modelling problems involving diffusion phenomena in discontinuous media. Among applications, we  can mention ecology 
(\cite{cantrel-cosner}), geophysics (\cite{lejay-geo1}, \cite{lejay-geo2}),
astrophysics (\cite{zhang}), or magneto/electroencephalography  (\cite{faugeras}).

In the one-dimensional context, various Random Walks and an Euler Scheme have been studied for the simulation of the solution of such SDEs  : for Random Walks we mention \cite{etore05a}, \cite{etore06a}, \cite{etore-lejay06a}, \cite{lejay-martinez04a}~; for the Euler Scheme see \cite{martinez04a}, \cite{martinez06a}, \cite{martinez12a} in the case where the discontinuity of the coefficient in the divergence operator appears at point $0$. Of course, for such SDEs, the order of discretization error of these discretization schemes is usually greater than those obtained in a more classical context. 

An important problem comes from the fact that SDEs corresponding to divergence form operators do not enter the classical scope of SDEs covered by the exact simulation methods. 

The main difficulty is that these SDEs include an additional term, which involves in dimension one the local time of the unknown process (in dimension greater than one, it involves the local time of a  one-dimensional auxiliary process; see \cite{maire-champagnat-10}). In fact, the laws of the solution of such one-dimensional SDEs are no longer absolutely continuous with respect to the Wiener measure. 

In this paper we present a first attempt for the adaptation of the exact simulation methods of \cite{beskos2} to one-dimensional SDEs with an additional term that involves the local time of the unknown process at point $0$. Namely, our object of study is $(X_t)_{t\geq 0}$ solution of 
\begin{equation}
\label{SDE-intro}
dX_t=\sigma(X_t)dW_t+\bar{b}(X_t)dt+\beta dL^0_t(X),\hspace{1,0 cm}X_0=x,
\end{equation}
where $|\beta|<1$ (with $\beta\neq 0$) and $L^0_t(X)$ is the symmetric local time of $X$ in zero at time $t$~;~the reason why we only deal with $|\beta|<1$ is that there is no solution to (\ref{SDE-intro}) when $\beta >1$ (the case where $|\beta|=1$ corresponds to a reflected diffusion  and we do not include it in our discussion). The reason why we restrict ourselves to the case $\beta\neq 0$ is made clear in Section 3 (see Remark \ref{rem-betaneqzero} and also the conclusion of the paper). 

Note that when $\sigma$ is identically equal to $1$ and $\bar{b}$ is identically equal to $0$, the solution $(X_t)$ of (\ref{SDE-intro}) is a standard Skew Brownian Motion (SBM in short). 
Under mild assumptions concerning $\bar{b}$ and standard ellipticity conditions on $\sigma$, it is known that
there exists a unique strong solution $(X_t)_{t\geq 0}$ to  (\ref{SDE-intro}) (see \cite{legall} for details).

Let us emphasize that this work includes the situation where $\bar{b}$ may be discontinuous at $0$. So that the results of this paper are also suited for the situation stated in \cite{martinez06a}, \cite{martinez12a} where the solution of (\ref{SDE-intro}) corresponds to a divergence form operator whose coefficient is discontinuous at $0$ (and is sufficiently smooth elsewhere).  We show a numerical example to illustrate this interesting case.

Let us now briefly explain our main idea.
When $\sigma\equiv 1$, we show that the law of $(X_t)_{t\geq 0}$ (solution of (\ref{SDE-intro})) is absolutely continuous with respect to the law of some Skew Brownian Motion (SBM) with a drift component. The reason why the SBM with drift appears naturally in our computations is explained in Section \ref{section-exact-sim}
(see Remark \ref{justification}).

So, contrary to the already mentioned discretization schemes where the standard SBM is used in force, we do not longer deal with a simple SBM but with a SBM that possesses a drift component. As a consequence, in order to adapt the method of \cite{beskos2} in this setting, we have to be able to simulate bridges of the SBM with drift.

In the last section of the paper, we discuss the limitations of the initial algorithm.

The main issue is to relax the boundedness assumptions made on the drift function $\bar{b}$, as is done in \cite{beskos5} for "classical" SDEs. In \cite{beskos5}, the authors use some kind of factorisations for the sample state space of the standard Brownian Bridge, which are consequences of William's decomposition theorem for Brownian Motion. Proving similar factorisations for the Skew Brownian Bridge with drift seems difficult to us. Nevertheless, we have been able to apply a result stated in Pitman-Yor \cite{Pitman-Yor} in the case of the standard Skew Brownian Bridge, which gives a first partial result. Unfortunately, we have not been able to relax the boundedness assumption on the drift function $\bar{b}$ and we think that much remains to do in this direction.

In our concluding remarks we discuss the particular problem of being able to produce an exact simulation algorithm in the case $\beta=0$, which in our opinion should be regarded as a special separate problem. We also draw bold lines for further investigation. 
		\subsection{Organisation of the paper}
		
		In Section \ref{sec-pb} we precise the hypotheses and define the problem we will deal with. We also introduce notations used in the sequel. In Section \ref{section-exact-sim} we present the exact simulation algorithm, adapted from Beskos et al. to our situation. It turns out that, in order to use the algorithm, we need to sample bridges of a Skew Brownian Motion with drift. Section \ref{sec-sbmdrift} is devoted to the computation of the transition probability density of the Skew Brownian Motion with drift. Then Section \ref{ss:section:bridge} explains how to sample bridges of a Skew Brownian Motion with drift, using rejection sampling with brownian bridges as proposals. 
		Section \ref{sec-num} presents numerical experiments, including a divergence form case. Finally,  we discuss possible extensions in Section \ref{sec-conclu}.

\section{Exposition of the exact simulation problem.}
\label{sec-pb}
		\subsection{Exact simulation problem and first assumptions}
		
Denote $C=C([0,T],\R)$ the set of continuous mappings from $[0,T]$ to $\R$ and $\cC$ the Borel 
$\sigma$-field on $C$ induced by the supreme norm. 

Let $\P$ be a probability measure on $(C,\cC)$ and $W$ a Brownian motion under $\P$ together with its completed natural filtration $\pare{{\cal F}_t}_{t\geq 0}$. We will denote $\P^{x}=\P\pare{\cdot\,|\,W_0=x}$.

Throughout the whole paper, we will make the following assumptions
\begin{list}{--}{}
\item $|\beta | <1$. 
\item 
\label{assumption-1}
The function $\bar{b}:\R\to\R$ is bounded and differentiable on ${\mathbb R}^{\ast,+}$ and ${\mathbb R}^{\ast,-}$ with a possible discontinuity at point $\{0\}$.
We suppose that both limits $\lim_{z\rightarrow 0+}\bar{b}(z):=\bar{b}(0+)$ and $\lim_{z\rightarrow 0-}\bar{b}(z):=\bar{b}(0-)$ exist and are finite. The value $\bar{b}(0)$ of the function $\bar{b}$ at $0$ is
of no importance and can be fixed arbitrarily to some constant (possibly different from either $\bar{b}(0+)$ or $\bar{b}(0-)$).

\label{assumption-2} 
\end{list}

We seek for an exact simulation algorithm of the paths of the solution of the one-dimensional Stochastic Differential Equation
\begin{equation}
\label{edstl}
dX_t=dW_t+\bar{b}(X_t)dt+\beta dL^0_t(X),\hspace{1,0 cm}X_0=x,
\end{equation}
where $L^0_t(X)$ is the symmetric local time of $X$ in zero at time $t$.

		\subsection{Some recalls on SDEs of type \eqref{edstl}}
		
		\subsubsection{Existence and uniqueness}
Under the assumptions of the previous section, the equation (\ref{edstl}) possesses a unique strong solution. In fact, performing the bijective change of variable $g(x)\mapsto (1-\beta)x\id{x\geq 0} + (1+\beta)x\id{x<0}$ allows to consider $Y_t:=g(X_t)$ solution of a new transformed equation 
$$dY_t = 1/(g^{-1}_{\pm})'(Y_s)dW_s + \bar{b}\circ g^{-1}/(g^{-1}_{\pm})'(Y_s)ds$$ 
without local time. Here $(g^{-1}_{\pm})'$ denotes the half sum of the right and left derivatives of $g^{-1}$.  

Since $(g^{-1}_{\pm})'$ is bounded from below by a strictly positive constant, this transformed equation makes sense and well-known results for classical one dimensional SDEs (see for example \cite{kara} Chap 5. Section 5.5) ensure that under the assumptions of Subsection \ref{assumption-1}, it possesses a weak solution. Then, $g^{-1}(Y)$ gives a weak solution of \eqref{edstl}. 

The difficulty concerns strong uniqueness. Strong uniqueness for the solutions of equation (\ref{edstl}) is proved with a direct application of Theorem 1.3 p.55 in the fundamental article \cite{legall}, which deals with a broader class of stochastic differential equations involving the local time of the unknown process.

Note that when $\beta=-1$ or $\beta=+1$, equation (\ref{edstl}) possesses a unique strong solution, which is a reflected diffusion at $0$, either reflected below $0$ ($\beta = -1$) or above $0$ ($\beta= +1$). 

Let us now briefly explain why there is no solution to equation (\ref{edstl}) when $|\beta|>1$. Remember that we are working with the symmetric local time $L_t^0(X)$. Let us denote by $L_t^{0,r}(X)$ (resp. $L_t^{0,l}(X)$) the right-hand sided local time of the process $X$ (resp. the left hand sided local time of $X$). It is an exercise to prove that if $X$ is a solution of (\ref{edstl}), then $L_t^0(X) = \frac{1+\beta}{2}L_t^{0,r}(X)$ and $L_t^0(X) = \frac{1-\beta}{2}L_t^{0,l}(X)$. In particular, we see that when $|\beta|>1$ there is no solution to (\ref{edstl}) (otherwise the symmetric local time of $X$ would be negative !). 

\subsubsection{Strong Markov property}
\label{sssection-markov}

The proof of the strong Markov property for solutions of equation (\ref{edstl}) is a separate problem from the one of existence and uniqueness. We refer to \cite{Kulik} for a rigorous proof. In the multidimensional context of diffusion processes with generalized drift, the proof of the Markov property may be found in \cite{Zaitseva-1}.

\section{Exact simulation}
\label{section-exact-sim}
		\subsection{Notations and additional assumptions}
		
\subsubsection{Notations}
\label{notations}
Throughout the whole paper, we use the following notations~:~

\begin{list}{--}{}
\item We note
$$N^c(x):=\frac{1}{\sqrt{2\pi}}\int_x^\infty e^{-\frac{y^2}{2}}dy.$$

\item 
We set  
\begin{equation}
\label{mu-def}
\mu:=\frac{1+\beta}{2\beta}\bar{b}(0+) - \frac{1-\beta}{2\beta}\bar{b}(0-).
\end{equation}
and define $b(z):=\bar{b}(z)-\mu$. 

We set 
$$z\mapsto \phi(z):=\displaystyle \frac{b^2(z)+b'(z) + 2\mu b(z)}{2}\id{{\mathbb R}^{\ast,+}\cup {\mathbb R}^{\ast,-}}(z).$$

\item We set $\tilde{\phi}(z):=\phi(z)-m$ with $m=\inf_{z\in\R}\phi(z)$; the constant $K$ denotes an upper bound of the function $\tilde{\phi}$.

\item  We set $B(u):=\displaystyle \int_0^u b(y)dy$, $u\in {\mathbb R}$. 

\item $B^{\beta,\mu}$ will denote the SBM of parameter $\beta$ and drift $\mu$. That is to say $B^{\beta,\mu}$ is the strong solution of
\eqref{edstl} in the case $\bar{b}\equiv \mu$, namely~:~
\begin{equation}
\label{edsmu}
dB^{\beta,\mu}_t=dW_t+\mu dt+\beta dL^0_t(B^{\beta,\mu}).
\end{equation}
We will denote $p^{\beta,\mu}(t,x,y)$ the transition probability density of $B^{\beta,\mu}$. 

Note that, with this notation,  $p^{0,\mu}(t,x,y)$ is the transition probability density
of the Brownian motion with constant drift $\mu\in\R$, namely
\begin{equation}
\label{def-pmu}
p^{0,\mu}(t,x,y)=\frac{1}{\sqrt{2\pi t}}\exp\big\{  -\frac{(y-x-\mu t)^2}{2t}  \big\}.
\end{equation}
Note also that $p^{\beta,0}(t,x,y)$ is the transition probability density of the SBM of parameter $\beta$ (without drift), see \cite{lejay-2006}.

\item We set 
$$
\tau_0:=\inf\pare{t\geq 0~:~B^{\beta,\mu}_t=0}\,\,\;\text{with the convention}\;\;\;\inf(\emptyset)=+\infty.
$$
We will denote by $h(x,.)$ the density of $\tau_0$ under $\P^x$.

\end{list}

\subsubsection{Additional assumptions}
\label{assumptions}

In this section, we will make the following additional assumptions~:~
\begin{list}{--}{}

\item $\beta\neq 0$ (for the reason of this last hypothesis, see Remark \ref{rem-betaneqzero}).

\item We assume that the function
$z\mapsto \phi(z)$ is bounded.

\item We assume that the function $u\mapsto\exp[B(u)-(u-x)^2/2T]$ is integrable.

\end{list}				
		
	    \subsection{Presentation of the exact simulation algorithm}
	     \label{sec:algo_nondege}
	    
\subsubsection{Change of probability}
		Recall that in this case $b(z):=\bar{b}(z)-\mu$ where $\mu$ is the constant defined by $$\displaystyle \mu:=\frac{1+\beta}{2\beta}\bar{b}(0+) - \frac{1-\beta}{2\beta}\bar{b}(0-).$$ Note that since $\beta \neq 0$ by assumption, this constant is well-defined. In the case where $\bar{b}$ is continuous at point $\{0\}$, observe that
$\mu$ reduces to $\bar{b}(0)$.

We have
$$dX_t=dW_t+b(X_t)dt+\mu dt+\beta dL^0_t(X).$$


In particular, we may perform Girsanov's theorem (see Theorem 3.5.1 in \cite{kara}) and we write
\begin{equation}
\label{sbmssQSD}
dX_t=dW^{SD}_t+\mu dt+\beta dL^0_t(X),
\end{equation}
where $\displaystyle W^{SD}_t\defi W_t+\int_0^tb(X_s)ds$ is a Brownian motion under the new probability $\Wsd$ defined by
\begin{equation}
\label{girsanov-bar}
\dfrac{d\P}{d\Wsd}=\exp\Big\{\int_0^Tb(X_t)dW^{SD}_t-\frac{1}{2}\int_0^Tb^2(X_t)dt \Big\}.
\end{equation}
From our assumptions on $b$, we are in position to apply the symmetric It\^o-Tanaka formula to the function $B(u)\defi \displaystyle\int_0^ub(y)dy$ and $(X_t)_{t\geq 0}$. 

Applying the occupation's time formula, we obtain
\begin{equation}
\label{eq:tpslocdisp}
\begin{split}
B(X_T)-B(x)=&\int_0^T\frac{b(X_t+)+b(X_t-)}{2}dX_t+\frac{1}{2}\int_0^Tb'(X_t)\id{X_t\neq 0}dt+\frac{b(0+)-b(0-)}{2}L_T^0(X)\\
=&\int_0^Tb(X_t)\id{X_t\neq 0}dW^{SD}_t  + \mu\int_0^Tb(X_t)\id{X_t\neq 0}dt\\
&+\frac{1}{2}\int_0^Tb'(X_t)\id{X_t\neq 0}dt+\underbrace{\pare{\frac{b(0+)+b(0-)}{2}\beta + \frac{{b}(0+)-{b}(0-)}{2}}}_{=\,0}L_T^0(X)
\end{split}
\end{equation}
where the last line comes from the definitions of $b$ and $\mu$ (and the property $dL_t^0(X)=\1_{X_t= 0}dL_t^0(X)$). From the fact that $\ell\{t\in [0,T]:X_t=0\}=0$ (where $\ell$ stands for the Lebesgue measure), we see that
\begin{equation}
\label{eq-justification-drift}
B(X_T)-B(x)=\int_0^Tb(X_t)dW^{SD}_t+\mu\int_0^Tb(X_t)dt+\frac{1}{2}\int_0^Tb'(X_t)dt.
\end{equation}
Thus, (\ref{girsanov-bar}) implies that for any functional $F(X)$ of the path up to time $T$, one has~:~
$$
\E_{\P}[F(X)]=\E_{\Wsd}\big[F(X)\exp\big\{B(X_T)-B(x)-\int_0^T\phi(X_t)dt \big\}  \big],
$$
 where $\phi(z)=\dfrac{b^2(z)+b'(z)+2\mu b(z)}{2}$.
 
 \begin{rem}
 \label{justification}
 Note that, because of the definition of $b$, there is no local time appearing in equality (\ref{eq-justification-drift}) after the application of the It\^o-Tanaka formula. This ensures that there
 is no local time involved in the exponential martingale of Girsanov's theorem, which makes it tractable for a numerical perspective. 
 
Retrospectively, this explains why in the sequel we have to deal with a Skew Brownian Motion with drift instead of a simple standard SBM. 
 \end{rem}
 
  \begin{rem}
 \label{rem-betaneqzero}
 We now explain our assumption $\beta\neq 0$. 
 
 Note that in the case $\beta=0$, the constant $\mu$ is no more defined. In fact, in the case $\beta=0$, because of the discontinuity of $\bar{b}$, it is no longer possible to get rid of the local time as in \eqref{eq:tpslocdisp}. More precisely, there is no constant $\theta$ such that proceeding as the computations in \eqref{sbmssQSD} and \eqref{girsanov-bar} with $b(x):=\bar{b}(x)-\theta$  we can cancel the local time term appearing in the exponential weight.

For a more detailed discussion on the case $\beta= 0$ the interested reader is invited to read the conclusion at the end of this paper. 
   \end{rem}
 
		\subsubsection{Exact simulation algorithm (after Beskos and al)}
		\label{subsubsec:exact-sim-algo}

Considering 
\eqref{sbmssQSD}, we see that the law of $X$ under $\Wsd$ is given by 
$p^{\beta,\mu}(t,x,y)dy$. 
\vspace{0,3 cm}

\vspace{0.2cm}
Following the lines of Beskos et al. in \cite{beskos2}, and considering the computations performed in the above section, we give an algorithm that returns an
exact simulation of a skeleton of $(X_t)_{t\in [0,T]}$ solution of (\ref{edstl}) starting from $x_0$.

\vspace{0.2cm}
\begin{center}
\hrulefill

\textsc{  
EXACT SIMULATION ALGORITHM FOR A SOLUTION OF (\ref{edstl}) starting from $x_0$.
\begin{enumerate}
\item Simulate a random variable $Z$ according to the density 
$$
h(y)=C\exp\pare{B(y)-B(x_0)} p^{\beta,\mu}(T,x_0,y),
$$
where $C$ is the normalizing constant such that $\int h(y)dy = 1$. 
Keep in memory the value $z$ of $Z$.
\item Simulate a Poisson Point Process with unit density on $[0,T]\times [0,K]$. The result is a random number $n$ of points of coordinates $(t_1,z_1),\dots, (t_n,z_n)$.
\item Simulate a skeleton $(B^{\beta,\mu}_{t_1},\ldots,B^{\beta,\mu}_{t_n})$ conditioned on $B^{\beta,\mu}_0=x_0$ and $B^{\beta,\mu}_T=z$.
\item If $\forall i\in \{1,\dots, n\}$ $\tilde{\phi}({B}^{\beta, \mu}_{t_i})\leq z_i$ accept the skeleton. Else return to step 1.
\end{enumerate}  
}

\hrulefill
\end{center}
\vspace{0.2cm}

This algorithm returns an exact sample of $(X_{t_1},\ldots,X_{t_n},X_T)$ (in particular we get an exact simulation of $X_T$, it is the value $z$ of $Z$ used for an accepted trajectory). 

\vspace{0.1cm} 
Note that in order to apply the methodology of \cite{beskos2} we have to be able to generate bridges of a drifted Skew Brownian Motion 
$B^{\beta,\mu}$. Indeed, this is the key one has to reach for in order to perform the Step 3.
	    
	    \section{Computation of the law of the SBM with drift}
	    \label{sec-sbmdrift}
\subsection{Recalls on the construction of the SBM using a "random flipping" of excursions}
\label{subsec:random-flipping}
In this paragraph, we present a construction of the Skew Brownian Motion - solution of (\ref{edsmu}) with $\mu=0$ and starting from $x=0$ - that gives an understanding of its relation with the standard Brownian motion. The construction is made out from a reflecting Brownian Motion with a change of sign of each excursion with probability $\pare{1-\beta}/{2}$. It is explained in \cite{RY} page 487 exercise 2.16 (we use the same notations as \cite{RY} in the explanations below). 

Suppose that $x=0$. The construction is as follows~:~let $(Y_n)_{n\geq 0}$ be a sequence of independent r.v.'s taking the values $1$ and $-1$ with probabilities $\pare{1+\beta}/{2}$ and $\pare{1-\beta}/{2}$ and independent of some Brownian Motion $B$. Let us set $\pare{{\cal F}_t^B}_{t\geq 0}$ the natural filtration generated by $B$ and satisfying the usual right continuous and completeness conditions and ${\cal F}^B:=\bigvee_{t\geq 0}{\cal F}^B_t$.

We also denote ${\cal H}:= \sigma(Y_n : n\geq 0)$ the corresponding $\sigma$-algebra generated by the whole sequence $(Y_n)_{n\geq 0}$ and ${\cal E}:=\sigma({\rm e}_s : s\geq 0)$ the $\sigma$-algebra generated by all the excursions of $B$ so that ${\cal E}\subset {\cal F}^B$. The "good" time clock for the point-wise excursion process $({\rm e}_{s})_{s\geq 0}$ is the process of time-change $(\tau_t)_{t\geq 0}$ defined as the r.c.l inverse of the local time $\pare{L_t^0\pare{B}}_{t\geq 0}$~:~so that the excursion process $({\rm e}_{s})_{s\geq 0}$ may be viewed as a Point Poisson Process on ${\cal C}_{0\rightarrow 0}$ running in the local time scale.  

For each $\omega$ in the set on which $B$ is defined, the set of excursions ${\rm e}_s(\omega)$ is countable and may be ordered. Define a process $B^\beta$ by putting $B^\beta_t(\omega) = Y_{n_s(\rm \bf e)}(\omega)|{\rm e}_s(t-\tau_{s-}(\omega), \omega)|$ if 
$\tau_{s-}\leq t\leq \tau_{s}$ and where ${\rm e}_s$ is the $n_s({\rm \bf e})$-th excursion in the above ordering. The random number $n_s({\rm \bf e})$ is a random variable measurable w.r.t. ${\cal E}$, which depends on the whole excursion process ${\rm \bf e} = ({\rm e}_u)_{u>0}$ and the time variable $s$ in the local time scale. It may be proved that the process thus obtained is a Markov process and that it is a Skew Brownian motion of parameter $\beta$ starting at $x=0$.


By construction the sigma algebras ${\cal H}= \sigma(Y_n~:~n\geq 0)$ and ${\cal F}^B$ are independent~; in particular, if we denote by $R(\rm e)$ the end point of excursion $\rm e$, then we have $\tau_t(\omega) = \sum\limits_{s\leq t}R({\rm e}_s(\omega))$ and thus $\tau_t$ is measurable w.r.t. ${\cal F}^B$. Note that this construction implies that almost surely, $L^0_t(B^\beta)=L^0_t(|B|) = L^0_t(B)$ for any  $t\geq 0$. So $\pare{L_t(B^\beta)}_{t\geq 0}$ may be recovered as the r.c.l inverse of $(\tau_t)_{t\geq 0}$. Consequently, it is adapted to $\pare{{\cal F}^B_t}_{t\geq 0}$ and it is independent of ${\cal H}$.

\begin{rem}
For a possible extension of this "random flipping of excursions" method for the construction of the solution to the more general equation (\ref{edstl}) (and possibly solutions of (\ref{SDE-intro})), we refer to the article of Lejay \cite{lejay-decomp}, which gives a decomposition of the It\^o measure associated to $X$ in the general context of solutions of (\ref{SDE-intro}). At least in the context of equation (\ref{edstl}), the result stated in \cite{lejay-decomp} should allow to perform a construction along the same lines as above, flipping the excursions of some reflected process whose law should be the same as the law of $|X|$. However and up to our knowledge, such construction has never been explicitly written down in the literature, even in the context of equation (\ref{edstl}).
\end{rem}

\subsection{Computation of the joint law of SBM and its local time}
Let us begin with a direct consequence of the construction explained above in Subsection \ref{subsec:random-flipping}.
\begin{lem}
\label{lemsko}
Let $W$ be a Brownian motion defined on $(C,\cC,\P)$ and $B^{\beta}$ the strong solution of \eqref{edsmu} with $\mu=0$. 

We have for all $t> 0$, 
\begin{equation}
\label{BLP0}
\P^0\big[\, |B^\beta_t| \in dy ; L_t^0(B^\beta)\in dl \, \big]
=\P^0\big[\, |W_t| \in dy ; L_t^0(W)\in dl \, \big].
\end{equation}
\end{lem}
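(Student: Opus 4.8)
The statement says that the joint law of the absolute value of the SBM and its symmetric local time at zero coincides with that of Brownian motion. The natural route is to exploit the ``random flipping of excursions'' construction recalled in Subsection \ref{subsec:random-flipping}. The key structural facts established there are: (i) $B^\beta$ is built from a single Brownian motion $B$ by leaving $|B|$ untouched and only randomly assigning a sign $Y_n$ to each excursion, independently of $\mathcal{F}^B$; (ii) consequently $|B^\beta_t| = |B|_t$ pathwise; and (iii) $L_t^0(B^\beta) = L_t^0(|B|) = L_t^0(B)$ almost surely for every $t\geq 0$. Granting these three facts, the proof is essentially immediate.

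\textbf{Main steps.} First I would recall from the construction that, for fixed $t>0$, the pair $\big(|B^\beta_t|,\,L_t^0(B^\beta)\big)$ equals $\big(|B_t|,\,L_t^0(B)\big)$ pointwise on the probability space (the flipping changes only signs of excursions, hence neither the modulus of the path at time $t$ nor the local time at $0$). Second, since two random variables that are almost surely equal have the same law, under $\P^0$ we get
$$
\P^0\big[\,|B^\beta_t|\in dy;\ L_t^0(B^\beta)\in dl\,\big] = \P^0\big[\,|B_t|\in dy;\ L_t^0(B)\in dl\,\big].
$$
Third, I would identify $B$ (the driving Brownian motion in the construction) with a generic Brownian motion started at $0$, so that the right-hand side is exactly $\P^0\big[\,|W_t|\in dy;\ L_t^0(W)\in dl\,\big]$, which is the claimed identity \eqref{BLP0}. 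If one wants to avoid relying on a particular realization, one can instead argue in law: the construction produces $B^\beta$ as a strong solution of \eqref{edsmu} with $\mu=0$, strong uniqueness gives that its law is the SBM law, and the construction exhibits a coupling with a Brownian motion $B$ under which the two relevant functionals agree; since the statement only concerns one-dimensional marginals of the pair $(|\cdot|, L^0)$, the coupling suffices.

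\textbf{Expected obstacle.} There is essentially no analytic difficulty here; the only point requiring care is the identity $L_t^0(B^\beta) = L_t^0(B)$ for the \emph{symmetric} local time. One must check that the flipping of excursions does not alter the symmetric local time at $0$ --- this is where it matters that we use the symmetric version $L^0$ and not the one-sided versions $L^{0,r}$ or $L^{0,l}$, since flipping an excursion from positive to negative moves mass between the right and left local times. The cleanest way to see the invariance is via the occupation-times formula together with $|B^\beta| = |B|$: for any bounded Borel $f$, $\int_0^t f(B^\beta_s)\,ds = \int_0^t \tfrac12\big(f(|B|_s)+f(-|B|_s)\big)\,ds$ up to symmetrization, and the symmetric local time at $0$ is recovered as the density (at level $0$) of a symmetrized occupation measure, which depends only on $|B|$. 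Alternatively, one invokes directly the statement in Subsection \ref{subsec:random-flipping} that $L^0_t(B^\beta) = L^0_t(|B|) = L^0_t(B)$, in which case the lemma is just the ``same law because almost surely equal'' observation above. I would present the short coupling argument and point to the construction for the local-time identity.
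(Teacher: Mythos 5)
Your proof is correct and follows the same route as the paper: the lemma is presented there as a direct consequence of the random-flipping construction, under which $|B^\beta|=|B|$ pathwise and $L^0(B^\beta)=L^0(|B|)=L^0(B)$ (the latter identity being stated explicitly in Subsection \ref{subsec:random-flipping}), so the two pairs are almost surely equal under the coupling and hence equal in law. Your added care about the symmetric local time being unaffected by sign flips --- most cleanly seen via $L^0_t=\lim_{\varepsilon\to 0}\frac{1}{2\varepsilon}\int_0^t \1_{\{|B^\beta_s|<\varepsilon\}}\,ds$, which depends only on $|B^\beta|=|B|$ --- is exactly the right point to flag.
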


\begin{rem}
We even have that the  process $(|B^\beta_t|, L_t^0(B^\beta))_{t\geq 0}$ is distributed as
$(|W_t|, L_t^0(W))_{t\geq 0}$ under $\P^0$. This is a clear consequence of the construction explained in paragraph \ref{subsec:random-flipping}. Another way to prove this fact is to check that their common distribution is the one of
$(M^W_t-W_t,M^W_t )_{t\geq 0}$ under $\P^0$, where $M^W_t=\max_{0\leq s\leq t}W_s$. This is related to the L\'evy theorem, as stated for instance in Theorem 3.6.17 in \cite{kara}, where it is proved by using the Skorokhod method. 

\end{rem}

Let us now state an intuitive result, which is somewhat not so easy to prove without using the construction explained in Subsection \ref{subsec:random-flipping}. The difficulty comes from the presence of the local time in the equalities below.

\begin{lem}
\label{excursion}
We have for all $t> 0$,
$$
\P^0\big[\, B^\beta_t \in dy ; L_t^0(B^\beta)\in dl    \, \big]=\frac{1+\beta}{2} \P^0\big[\, |B^\beta_t| \in dy ; L_t^0(B^\beta)\in dl \big]
+\frac{1-\beta}{2} \P^0\big[\, -|B^\beta_t| \in dy ; L_t^0(B^\beta)\in dl \big].
$$
\end{lem}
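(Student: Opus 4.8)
The plan is to deduce Lemma \ref{excursion} directly from the "random flipping of excursions" construction recalled in Subsection \ref{subsec:random-flipping}, exploiting the independence between the flip sequence $(Y_n)_{n\geq 0}$ (generating $\cH$) and the reflected path data (generating $\cF^B$, hence $\cE$). Recall that in that construction $B^\beta_t = Y_{n_s(\mathbf e)}\,|\mathrm e_s(t-\tau_{s-},\cdot)|$ on the excursion interval $\tau_{s-}\leq t\leq\tau_s$ straddling time $t$, that $|B^\beta_t| = |\mathrm e_s(t-\tau_{s-},\cdot)|$ is $\cF^B$-measurable, and that $L^0_t(B^\beta) = L^0_t(B) = L^0_t(|B|)$ is also $\cF^B$-measurable. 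Thus, conditionally on $\cF^B$, the sign of $B^\beta_t$ is precisely the flip variable $Y_{n_{S_t}(\mathbf e)}$ attached to the straddling excursion (where $S_t = L^0_t(B)$ is the local time clock at $t$), which takes the value $+1$ with probability $(1+\beta)/2$ and $-1$ with probability $(1-\beta)/2$, independently of everything in $\cF^B$ — in particular independently of the pair $(|B^\beta_t|, L^0_t(B^\beta))$.

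Concretely, I would write, for any bounded test function $f$,
\begin{equation*}
\E^0\big[f(B^\beta_t)\,;\,L^0_t(B^\beta)\in dl\big]
= \E^0\big[\,\E^0[\,f(Y_{n_{S_t}(\mathbf e)}\,|B^\beta_t|)\mid \cF^B\,]\,;\,L^0_t(B^\beta)\in dl\,\big].
\end{equation*}
On the event $\{t \ne \tau_{s-}\text{ for all }s\}$, which has full probability since the set of times $t$ that are left endpoints of excursions is a.s. Lebesgue-null and, more to the point, for fixed $t$ the point $t$ a.s. lies in the interior of a genuine excursion interval or is a zero of $B$ (a set of times of zero Lebesgue measure but we only need that a.s. $t$ is not a left endpoint, which holds because the countably many left endpoints form a null set and $t$ is fixed), the straddling excursion index $n_{S_t}(\mathbf e)$ is a well-defined $\cE\subset\cF^B$-measurable random variable, and $Y_{n_{S_t}(\mathbf e)}$ is, conditionally on $\cF^B$, a $\{\pm1\}$-valued variable with the stated Bernoulli law by the independence of $\cH$ and $\cF^B$. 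Hence the inner conditional expectation equals $\tfrac{1+\beta}{2}f(|B^\beta_t|) + \tfrac{1-\beta}{2}f(-|B^\beta_t|)$, and plugging this back and disintegrating in $y = |B^\beta_t|$ gives
\begin{equation*}
\P^0\big[B^\beta_t\in dy\,;\,L^0_t(B^\beta)\in dl\big]
= \tfrac{1+\beta}{2}\,\P^0\big[|B^\beta_t|\in dy\,;\,L^0_t(B^\beta)\in dl\big]
+ \tfrac{1-\beta}{2}\,\P^0\big[-|B^\beta_t|\in dy\,;\,L^0_t(B^\beta)\in dl\big],
\end{equation*}
which is the claim. (Alternatively, one can bypass the excursion index bookkeeping by a simpler symmetry argument: start from the stronger process-level statement that $(|B^\beta|,L^0(B^\beta))\overset{d}{=}(|W|,L^0(W))$ and note that, by the flipping construction applied to $W$ itself, $\sgn(B^\beta_t)$ given $(|B^\beta_t|,L^0_t(B^\beta))$ is $\pm1$ with weights $(1\pm\beta)/2$; but making "$\sgn$ of the straddling excursion" rigorous still requires the same excursion-theoretic care.)

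The main obstacle I anticipate is the measurability/well-definedness of the straddling-excursion flip variable $Y_{n_{S_t}(\mathbf e)}$ for a fixed deterministic time $t$: one must argue carefully that, almost surely, $t$ is not an endpoint of an excursion (or handle the zero set $\{B^\beta_t=0\}$, on which $L^0_t$ still makes sense and the identity is trivial on that part since $y=0$), that $n_{S_t}(\mathbf e)$ is genuinely $\cE$-measurable, and that the conditional law of $Y_{n_{S_t}(\mathbf e)}$ given $\cF^B$ is indeed the unconditional Bernoulli$((1\pm\beta)/2)$ law despite $n_{S_t}(\mathbf e)$ being a random (path-dependent) index — this uses that $(Y_n)_{n\geq0}$ is i.i.d. and independent of $\cF^B$, so that for any $\cF^B$-measurable finite-valued index $N$, $Y_N$ has the common law of the $Y_n$'s conditionally on $\cF^B$. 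Once this is in place the rest is the routine disintegration above. This is precisely the "difficulty coming from the presence of the local time" flagged before the statement, and it is exactly why the excursion-theoretic construction of Subsection \ref{subsec:random-flipping}, rather than a direct SDE argument, is the right tool.
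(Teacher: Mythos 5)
Your proposal is correct and follows essentially the same route as the paper: both rely on the random-flipping construction of Subsection \ref{subsec:random-flipping}, the independence of $\cH=\sigma(Y_n:n\geq 0)$ and $\cF^B$, the $\cF^B$-measurability of $(|B^\beta_t|,L^0_t(B^\beta))$ and of the straddling-excursion index, and a conditioning on $\cF^B$ that reduces the sign of $B^\beta_t$ to a Bernoulli$((1\pm\beta)/2)$ variable. The only cosmetic difference is that you treat both signs at once via a test function (and spell out the measurability caveats about excursion endpoints more explicitly), whereas the paper splits into the events $\{y\geq 0\}$ and $\{y<0\}$.
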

\begin{proof}

We start from the construction of SBM using a random flipping of excursions coming from a reflected Brownian Motion as explained at paragraph \ref{subsec:random-flipping}.

Let $\cal S$ be the space of real sequences $(a_k)_{k\in \mathbb N}$ and denote $\Phi : {\cal S}\times {\mathbb N} \rightarrow {\mathbb R}$ the coordinate function defined by
$\Phi((a_k)_{k\in \mathbb N} , n) = a_{n}$. From the independence of ${\cal H}$ and ${\cal F}_t^B$ and since 
$\pare{L_t^0(B^\beta)}_{t\geq 0}$ is adapted w.r.t. $\pare{{\cal F}_t^B}_{t\geq 0}$, from the properties of the conditional expectation,

\begin{equation*}
\begin{split}
&\1_{y\geq 0}\P^0\big[\, B^\beta_t \in dy ; L_t^0(B^\beta)\in dl    \, \big] = \1_{y\geq 0}\E^0 \left [\P^0\big[\, B^\beta_t \in dy ; L_t^0(B^\beta)\in dl    \,|\;{\cal F}^B\; \big]\right ]\\
&=\E^0 \left [\P^0\big[\, Y_{n_s(\rm \bf e)} > 0 ; |{\rm e}_s(t-\tau_{s-}(\omega), \omega)|\in dy ; L_t^0(B^\beta)\in dl    \,|\;{\cal F}^B\; \big]\right ]\\
&=\E^0 \left [\P^0\big[\,  \Phi((Y_{k})_{k\in {\mathbb N}}(\omega), n_s({\rm e}(\omega))) > 0 \,| \;{\cal F}^B\; \big];  |{\rm e}_s(t-\tau_{s-}(\omega), \omega)|\in dy  ;\,L_t^0(B^\beta)\in dl\right ]\\
&=\E^0 \left [\P^0\big[Y_n>0\big]  \mid_{n=n_s({\rm e}(\omega))}; |{\rm e}_s(t-\tau_{s-}(\omega), \omega)|\in dy ; L_t^0(B^\beta)\in dl  \right ]\\
&=\frac{1+\beta}{2}\P^0\big[\, |B^\beta_t| \in dy ; L_t^0(B^\beta)\in dl \big].
\end{split}
\end{equation*}
Proceeding similarly on $\R_-^*$ we get,
$$
\1_{y< 0}\P^0\big[\, B^\beta_t \in dy ; L_t^0(B^\beta)\in dl    \, \big]=\frac{1-\beta}{2} \P^0\big[\, -|B^\beta_t| \in dy ; L_t^0(B^\beta)\in dl \big],
$$
therefore the result.
\end{proof}

Using the two last lemmas we can prove the following result.

\begin{prop}
\label{densBL}
Let $W$ be a Brownian motion defined on $(C,\cC,\P)$ and $B^{\beta}$ the strong solution of \eqref{edsmu} with $\mu=0$.

We have for all $t>0,\;x\geq 0$,
$$
\begin{array}{lll}
\P^x\big[\, B^\beta_t \in dy ; L_t^0(B^\beta)\in dl    \, \big]&=&
\1_{y\geq 0}\1_{l>0}\frac{(1+\beta) (l+y+x)}{\sqrt{2\pi t^3}}\exp\big\{   -\frac{(l+y+x)^2}{2t}  \big\}dydl\\
\\
&&+\1_{y\geq 0}\frac{1}{\sqrt{2\pi t}}\big(  \exp\{-\frac{(y-x)^2}{2t}\} -   \exp\{-\frac{(y+x)^2}{2t}\}    \big)dy\delta_0(dl)\\
\\
&&+\1_{y<0}\1_{l\geq 0}\frac{(1-\beta) (l-y+x)}{\sqrt{2\pi t^3}}\exp\big\{   -\frac{(l-y+x)^2}{2t}  \big\}dydl.\\
\end{array}
$$

\end{prop}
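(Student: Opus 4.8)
The plan is to combine Lemma \ref{excursion} (which reduces the signed joint law to the unsigned one, weighted by $(1\pm\beta)/2$) with Lemma \ref{lemsko} (which replaces $(|B^\beta_t|,L^0_t(B^\beta))$ by $(|W_t|,L^0_t(W))$), and then to feed in the classical joint law of reflected Brownian motion and its local time. The one genuinely new ingredient compared with the $x=0$ case treated in the lemmas is the dependence on the starting point $x\ge 0$, so the real content is a careful bookkeeping of what happens before the process first hits $0$.

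First I would reduce to starting point $0$ after the first hitting time of $0$. For $x>0$, let $\tau_0$ be the first time $B^\beta$ (equivalently $W$, up to that time, since the two SDEs coincide away from $0$) hits $0$; before $\tau_0$ the local time is zero, so $L^0_t(B^\beta)=0$ on $\{t<\tau_0\}$ and we get the atom at $l=0$. On $\{t<\tau_0\}$, $B^\beta$ behaves exactly like a Brownian motion started at $x$ killed at $0$, whose sub-probability density at $y\ge 0$ is the standard reflection-principle expression $\frac{1}{\sqrt{2\pi t}}\big(e^{-(y-x)^2/2t}-e^{-(y+x)^2/2t}\big)$; this is precisely the $\delta_0(dl)$ term. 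On $\{t\ge\tau_0\}$, the strong Markov property (Subsection \ref{sssection-markov}) lets me restart at $0$: conditionally on $\tau_0=s$, the pair $(B^\beta_t,L^0_t(B^\beta))$ is distributed as $(B^\beta_{t-s},L^0_{t-s}(B^\beta))$ started from $0$, to which I apply Lemma \ref{excursion} and then Lemma \ref{lemsko}.

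Next I would assemble the $l>0$ part. By the two lemmas, the $x=0$ law is $\frac{1+\beta}{2}\,\P^0[|W_{t}|\in dy;L^0_{t}(W)\in dl]$ for $y\ge 0$ and the mirror image with weight $\frac{1-\beta}{2}$ for $y<0$, where
$$
\P^0\big[\,|W_u|\in dy;L^0_u(W)\in dl\,\big]=\frac{l+y}{\sqrt{2\pi u^3}}\exp\Big\{-\frac{(l+y)^2}{2u}\Big\}\,dy\,dl,\qquad y\ge0,\ l\ge0,
$$
the standard joint density (obtainable from L\'evy's theorem, i.e. the identification with $(M^W_u-W_u,M^W_u)$ noted in the remark after Lemma \ref{lemsko}). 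Then I must integrate over the hitting time: with $h(x,\cdot)$ the $\P^x$-density of $\tau_0$, I need
$$
\int_0^t h(x,s)\,\frac{l+y}{\sqrt{2\pi(t-s)^3}}\exp\Big\{-\frac{(l+y)^2}{2(t-s)}\Big\}\,ds
=\frac{l+y+x}{\sqrt{2\pi t^3}}\exp\Big\{-\frac{(l+y+x)^2}{2t}\Big\},
$$
and symmetrically with $y$ replaced by $|y|$ on the negative side, giving the $(l+y+x)$ and $(l-y+x)$ terms with the correct weights $(1\pm\beta)/2$. I would verify this convolution identity either by plugging in $h(x,s)=\frac{x}{\sqrt{2\pi s^3}}e^{-x^2/2s}$ and doing the Gaussian convolution directly, or — cleaner — by arguing probabilistically: $L^0_t(W)$ under $\P^x$ has the same law as $L^0_t(W)-$ (hitting-time shift), and the pair $(L^0_t(W),|W_t|)$ under $\P^x$ equals in law $(a,b)\mapsto$ the image under $W\mapsto(M-W,M)$ started at $x$, so $l+|y|$ simply gets shifted to $l+|y|+x$; this is the "$M^W-W$" representation again. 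That probabilistic route is the one I would actually write, since it sidesteps the explicit integral.

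The step I expect to be the main obstacle is the honest justification of the strong-Markov splitting at $\tau_0$ together with the local-time additivity $L^0_t(B^\beta)=L^0_{t-\tau_0}(B^\beta)\circ\theta_{\tau_0}$ on $\{t\ge\tau_0\}$ — i.e. making sure that conditioning on $\{t<\tau_0\}$ really produces the clean killed-Brownian-motion density with no residual local-time contribution, and that on $\{t\ge\tau_0\}$ nothing from the pre-$\tau_0$ path leaks into the joint law beyond the value of $\tau_0$. Once that decomposition is in place, the remaining work is the (routine) Gaussian convolution or, equivalently, the invocation of the $(M^W-W,M^W)$ identity shifted by $x$, and collecting the three terms.
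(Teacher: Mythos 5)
Your proposal follows essentially the same route as the paper's proof: the atom at $l=0$ via the reflection principle on $\{t<\tau_0\}$, the strong Markov property at $\tau_0$ combined with Lemmas \ref{excursion} and \ref{lemsko} for the $l>0$ part, and then the reversal of the strong Markov property for $W$ (equivalently, the shifted L\'evy identity for $(|W_t|,L^0_t(W))$ under $\P^x$) to absorb the convolution against $h(x,\cdot)$ rather than computing it explicitly. The one correction needed is a constant: the joint density of $(|W_u|,L^0_u(W))$ under $\P^0$ is $\frac{2(l+y)}{\sqrt{2\pi u^3}}\exp\{-\frac{(l+y)^2}{2u}\}$, not $\frac{l+y}{\sqrt{2\pi u^3}}\exp\{-\frac{(l+y)^2}{2u}\}$ (the image of $\P^0[M^W_u\in db,\,W_u\in da]=\frac{2(2b-a)}{\sqrt{2\pi u^3}}e^{-(2b-a)^2/2u}\,da\,db$ under $(a,b)\mapsto(b-a,b)$); without this factor $2$, which cancels against the weights $\frac{1\pm\beta}{2}$, your $l>0$ terms come out half of the stated ones.
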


\begin{rem} The result of  Proposition \ref{densBL}  appears as a corollary of a more general result stated in \cite{Zaitseva-2}. Note also that
the result of Proposition \ref{densBL} (and consequently the result stated in Proposition \ref{densitySBMmu}  in the next Section) differ slightly from results published by T. Appuhamillage et al. in the recent article \cite{waymire} where there is a computational error (see also \cite{gangsters} for a discussion). We will detail the computations for the sake of completeness and clarification.
\end{rem}

\begin{proof}[Proof of Proposition \ref{densBL}]
{\it Step 1.} Combining the results of the Lemmas \ref{lemsko} and \ref{excursion} we have 
$$
\begin{array}{lll}
\1_{y\geq 0}\P^0\big[\, B^\beta_t \in dy ; L_t^0(B^\beta)\in dl    \, \big]
&=&\frac{1+\beta}{2} \P^0\big[\, |B^\beta_t| \in dy ; L_t^0(B^\beta)\in dl \big]
\\
&=&\frac{1+\beta}{2} \P^0\big[\, |W_t| \in dy ; L_t^0(W)\in dl  \, \big].
\end{array}
$$

{\it Step 2.} Let  $x>0$. As $\1_{y\geq 0}\1_{l>0}\P^x[B^\beta_t \in dy ; L_t^0(B^\beta)\in dl ; t< \tau_0]=0$, we have, using the strong Markov property,
\begin{equation*}
\label{mar1}
\begin{array}{lll}
\1_{y\geq 0}\1_{l>0}\P^x[\, B^\beta_t \in dy ; L_t^0(B^\beta)\in dl    \, ]&=&
\1_{y\geq 0}\1_{l>0}\P^x[\, B^\beta_t \in dy ; L_t^0(B^\beta)\in dl ; t\geq \tau_0 \, ]
\\
\\
&=&\1_{y\geq 0}\1_{l>0}\E^x[\1_{\{ t\geq \tau_0 \}} \P^x[\, B^\beta_t \in dy ; L_t^0(B^\beta)\in dl \, | \cF_{\tau_0}]]
\\
\\
&=&\1_{y\geq 0}\1_{l>0}\int_0^t \P^0[  B^\beta_{t-s} \in dy ; L_{t-s}^0(B^\beta)\in dl \,]h(x,s)ds,
\\
\end{array}
\end{equation*}
where $h(x,.)$ is the density of $\tau_0$ under $\P^x$. But $h(x,.)$ is also the density of $T_0=\inf(t\geq 0~:~W_t =0)$. And using the first step of the proof we have
\begin{equation*}
\label{mar2}
\1_{y\geq 0}\1_{l>0}\P^0[  B^\beta_{t-s} \in dy ; L_{t-s}^0(B^\beta)\in dl\,]=
\1_{l>0}\frac{1+\beta}{2} \P^0[\, |W_{t-s}| \in dy ; L_{t-s}^0(W)\in dl  \, ].
\end{equation*}

Using again the strong Markov property we get
\begin{equation*}
\label{mar3}
\begin{array}{lll}
\1_{y\geq 0}\1_{l>0}\P^x[\, B^\beta_t \in dy ; L_t^0(B^\beta)\in dl    \, ]&=&
\1_{l>0}\E^x[\1_{\{ t\geq T_0 \}} \frac{1+\beta}{2}\P^x[\, |W_t| \in dy ; L_t^0(W)\in dl  \, | \cF_{T_0}]]
\\
\\
&=&\1_{l>0}\displaystyle\frac{1+\beta}{2}\P^x[\, |W_t| \in dy ; L_t^0(W)\in dl ; t\geq T_0 \, ]
\\
\\
&=&\1_{l>0}\displaystyle\frac{1+\beta}{2}\P^x[\, |W_t| \in dy ; L_t^0(W)\in dl \,]\\
\\
&=&\1_{y\geq 0}\1_{l>0}\displaystyle\frac{(1+\beta) (l+y+x)}{\sqrt{2\pi t^3}}\exp\big\{ -\frac{(l+y+x)^2}{2t}  \big\}dydl.
\end{array}
\end{equation*}

{\it Step 3.} It is a consequence of the reflection principle that

$$
\begin{array}{ll}
\1_{y\geq 0}\P^x[\, B^\beta_t \in dy ; L_t^0(B^\beta)=0    \, ] &=
\1_{y\geq 0}\P^x[\, B^\beta_t \in dy ; t<\tau_0    \, ]\\
\\
&=\1_{y\geq 0}\displaystyle\frac{1}{\sqrt{2\pi t}}\pare{  \exp\{-\frac{(y-x)^2}{2t}\} -   \exp\{-\frac{(y+x)^2}{2t}\}    }.\\
\end{array}
$$

\vspace{0.2cm}
Using Step 1 to 3 we have the result for $x\geq 0$ on $\R_+\times\R_+$. In order to retrieve the result on $\R_-^*\times\R_+$ we use Step 1 and 2 with $\frac{1+\beta}{2}$ replaced by $\frac{1-\beta}{2}$ and $\1_{y\geq 0}$ replaced by $\1_{y<0}$,
and the fact that for $x\geq 0$, $\1_{y<0}\P^x[\, B^\beta_t \in dy ; L_t^0(B^\beta)=0    \, ] =\1_{y<0}\P^x[\, B^\beta_t \in dy ; t<\tau_0    \, ]=0$.
\end{proof}

		\subsection{The law of the Skew Brownian motion with drift}
		We have the following proposition.

\begin{prop}
\label{densitySBMmu}  

We have that, 
$$p^{\beta,\mu}(t,x,y)=
\left\{ 
\begin{array}{l}
\frac{1}{\sqrt{2\pi t}}\exp\{\mu (y-x)-\frac{1}{2}\mu^2 t\}\big(  \exp\{ -\frac{(y-x)^2}{2t} \}  - \exp\{ -\frac{(y+x)^2}{2t} \}  \big)\\
+\,\,\frac{1+\beta}{\sqrt{2\pi t}}\exp\big\{ -\frac{(x+y)^2}{2t}+\mu (y-x) -  \frac{1}{2}\mu^2 t \big\}\\
\hspace{3,0 cm}\times \big[ 1 -  \beta\mu\sqrt{2\pi t} \exp\big\{ \frac{(x+y+t\beta\mu)^2 }{2t} \big\}  N^c(\frac{\beta\mu t+x+y}{\sqrt{t}}) \big],\\
\text{ if }x\geq 0,y\geq 0,\\
\\
\frac{1-\beta}{\sqrt{2\pi t}}\exp\big\{ -\frac{(x - y)^2}{2t} + \mu (y-x) -  \frac{1}{2}\mu^2 t \big\}\\
\hspace{3,0 cm}\times \big[ 1 -  \beta\mu\sqrt{2\pi t} \exp\big\{ \frac{(x - y+t\beta\mu)^2 }{2t} \big\}  N^c(\frac{\beta\mu t+x - y}{\sqrt{t}}) \big],\\
\text{ if }x\geq 0,y<0.\\
\\
\frac{1}{\sqrt{2\pi t}}\exp\{\mu (y-x)-\frac{1}{2}\mu^2 t\}\big(  \exp\{ -\frac{(y-x)^2}{2t} \}  - \exp\{ -\frac{(y+x)^2}{2t} \}  \big)\\
+\,\,\frac{1-\beta}{\sqrt{2\pi t}}\exp\big\{ -\frac{(x + y)^2}{2t} +\mu (y-x) -  \frac{1}{2}\mu^2 t \big\}\\
\hspace{3,0 cm}\times  \big[ 1 -  \beta\mu\sqrt{2\pi t} \exp\big\{ \frac{(-x-y+t\beta\mu)^2 }{2t} \big\}  N^c(\frac{\beta\mu t - y - x}{\sqrt{t}}) \big],\\
\text{ if }x<0,y<0,\\
\\
\frac{1+\beta}{\sqrt{2\pi t}}\exp\big\{ -\frac{(x-y)^2}{2t} +\mu (y-x) -  \frac{1}{2}\mu^2 t \big\} \\
\hspace{3,0 cm}\times\big[ 1 -  \beta\mu\sqrt{2\pi t} \exp\big\{ \frac{(y-x+t\beta\mu)^2 }{2t} \big\}  N^c(\frac{\beta\mu t+y-x}{\sqrt{t}}) \big],
\\
\text{ if }x<0,y\geq 0.\\
\end{array}
\right.
$$

\end{prop}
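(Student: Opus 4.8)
The plan is to derive $p^{\beta,\mu}$ from the driftless joint law of Proposition~\ref{densBL} by a Girsanov change of measure, then carry out an explicit one-dimensional Gaussian integration; the case $x<0$ is finally deduced from the case $x\geq 0$ by a reflection symmetry.

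\emph{Step 1 (Girsanov reduction).} Let $B^\beta$ denote the driftless SBM, i.e. the strong solution of \eqref{edsmu} with $\mu=0$ carried by $(C,\cC,\P^x)$, and let $W_t=B^\beta_t-x-\beta L^0_t(B^\beta)$ be its driving Brownian motion. Since $\mu$ is a constant, $\mathcal Z_t:=\exp(\mu W_t-\tfrac12\mu^2t)$ is a true martingale, and under the probability $\Q^x$ with $d\Q^x/d\P^x|_{\cF_t}=\mathcal Z_t$ the process $W_t-\mu t$ is a Brownian motion. Substituting $dW_t=d(W_t-\mu t)+\mu\,dt$ into \eqref{edsmu} (with $\mu=0$) shows that, under $\Q^x$, the process $B^\beta$ solves \eqref{edsmu} with drift $\mu$, hence is a copy of $B^{\beta,\mu}$ started at $x$. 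Therefore, for any bounded Borel $f$,
$$
\int f(y)\,p^{\beta,\mu}(t,x,y)\,dy=\E^x_{\P}\big[\mathcal Z_t\,f(B^\beta_t)\big].
$$
Plugging in $W_t=B^\beta_t-x-\beta L^0_t(B^\beta)$ and using $\mu B^\beta_t=\mu y$ on $\{B^\beta_t\in dy\}$ yields the key identity
$$
p^{\beta,\mu}(t,x,y)=e^{\mu(y-x)-\frac12\mu^2t}\int_{[0,\infty)}e^{-\mu\beta l}\,\P^x\big[B^\beta_t\in dy;\,L^0_t(B^\beta)\in dl\big]\big/dy .
$$

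\emph{Step 2 (integration against the joint law, $x\geq 0$).} Insert the explicit expression of Proposition~\ref{densBL} and integrate in $l$. For $y\geq 0$ the atom at $l=0$ contributes $e^{\mu(y-x)-\frac12\mu^2t}\frac{1}{\sqrt{2\pi t}}(e^{-(y-x)^2/2t}-e^{-(y+x)^2/2t})$, which is exactly the first line of the claimed formula. The absolutely continuous part leads to the integral $\int_0^\infty e^{-\mu\beta l}\frac{l+x+y}{\sqrt{2\pi t^3}}e^{-(l+x+y)^2/2t}\,dl$; with $u=l+x+y$, completing the square in the exponent, and splitting $u=(u+t\beta\mu)-t\beta\mu$, this is an elementary combination of the Gaussian density and $N^c$. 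Multiplying by $(1+\beta)e^{\mu(y-x)-\frac12\mu^2t}$ and simplifying the exponentials via $-(x+y+t\beta\mu)^2/2t=-(x+y)^2/2t-\beta\mu(x+y)-\tfrac12t\beta^2\mu^2$ reproduces the remaining two lines of the $x\geq 0,\,y\geq 0$ case. The subcase $x\geq 0,\,y<0$ is identical, with no atom and with $x+y$ replaced by $x-y$ and $(1+\beta)$ by $(1-\beta)$.

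\emph{Step 3 (case $x<0$) and the main obstacle.} If $X$ solves \eqref{edsmu} then $-X$ solves the same equation with $(\beta,\mu)$ replaced by $(-\beta,-\mu)$, since $-W$ is a Brownian motion and the symmetric local time satisfies $L^0_t(-X)=L^0_t(X)$; hence $p^{\beta,\mu}(t,x,y)=p^{-\beta,-\mu}(t,-x,-y)$, and the two $x<0$ cases follow from Step~2 after substituting $(\beta,\mu,x,y)\mapsto(-\beta,-\mu,-x,-y)$ and checking that the $N^c$-arguments and the exponentials coincide with the claimed expressions. The proof has no deep obstacle: everything rests on the fact that Girsanov is applied from the \emph{driftless} SBM to the drifted one, so the local time merely rides along and creates no Radon--Nikodym difficulty; the only genuine care needed is the bookkeeping of the four sign regions and the precise evaluation of the Gaussian integral so as to land on the exact constants of the statement.
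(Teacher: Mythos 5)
Your proof is correct and follows essentially the same route as the paper's: a Girsanov/Cameron--Martin tilt reducing $p^{\beta,\mu}$ to an integral of $e^{-\beta\mu l}$ against the joint law of $(B^\beta_t,L^0_t(B^\beta))$ from Proposition~\ref{densBL}, an explicit Gaussian computation producing the $N^c$ terms, and the symmetry $p^{\beta,\mu}(t,x,y)=p^{-\beta,-\mu}(t,-x,-y)$ for $x<0$. The only (cosmetic) difference is that you parametrize the exponential weight directly by the local time $l$, whereas the paper routes through the joint law of $(B^\beta_t,W_t)$ and the bijection $w=y-x-\beta l$ before performing the same integral.
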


\begin{rem}
\label{rem-dens-pos}
It can be shown that the quantity $1 -  \beta\mu\sqrt{2\pi t} \exp\big\{ \frac{(|x|+|y|+t\beta\mu)^2 }{2t} \big\}  N^c(\frac{\beta\mu t+|x|+|y|}{\sqrt{t}})$
involved in $p^{\beta,\mu}(t,x,y)$ remains strictly positive, whatever the sign of $\beta\mu$ (see Remark \ref{sign-dens}).
\end{rem}

\begin{proof}[Proof of Proposition \ref{densitySBMmu}]

We have
$$
dB^{\beta,\mu}_t=dW^\mu_t+\beta dL^0_t(B^{\beta,\mu}),
$$
with $W^\mu_t=W_t+\mu t$ a Brownian motion under 
$\Q^\mu$ defined by $\dfrac{d\Q^\mu}{d\P}=\exp\{ -\mu W_t - \dfrac{1}{2}\mu^2t \}$.
Note that under $\Q^\mu$ the process $B^{\beta,\mu}$ starting from $0$ is distributed as $B^\beta$ starting from $0$ under $\P$.

For any bounded continuous function $f$ and any $t\geq 0$, we have
\begin{equation}
\label{changmu}
\begin{array}{lll}
\E_\P^x[f(B^{\beta,\mu}_t)]&=&\E_\P^0[f(B^{\beta,\mu}_t + x)]\\
&=&\E^0_{\Q^\mu}[f(B^{\beta,\mu}_t + x) \exp\{\mu W^\mu_t-\frac{1}{2}\mu^2t \} ]\\
\\
&=&\int\int_{\R^2}f(y + x)\exp\{\mu w-\frac{1}{2}\mu^2 t\}\P^0[B^\beta_t\in dy; \,W_t\in  dw]\\
&=&\int\int_{\R^2}f(y)\exp\{\mu w-\frac{1}{2}\mu^2 t\}\P^{x}[B^\beta_t\in dy; \,W_t-x\in  dw]\\
\end{array}
\end{equation}

Suppose $\beta>0$.

We set $\Phi_x(z,l)=(z, z - x-\beta l)$ which defines a bijection $\Phi_x:\R\times \R_+\to D_x$ where $D_x=\{(y,w)\in\R^2: y -x \geq w\}$.
Note that $(B^\beta_t,W_t-x)=\Phi_x(B^\beta_t,L^0_t(B^\beta))$. Besides, almost surely, $(B^\beta,L^0(B^\beta))\in \R\times\R_+$ and $(B^\beta,W-x)\in D_x$.

For $x>0$, Proposition \ref{densBL} ensures that
the measure $\P^x[\, B^\beta_t \in dy ; L_t^0(B^\beta)\in dl  \, ]$ has a density with respect to $dy\,dl$ on $\R\times \R^{*,+}$, and gives mass to the segments of $\R_{+}\times\{ 0\}$ with the  density
\begin{equation}
\label{densseg}
\P^x[\, B^\beta_t \in dy ; L_t^0(B^\beta)=0  \, ]=\frac{1}{\sqrt{2\pi t}}\big(  \exp\{-\frac{(y-x)^2}{2t}\} -   \exp\{-\frac{(y+x)^2}{2t}\}    \big)dy.
\end{equation}

Let us denote $\Delta_x:=\{(y,w)\in\R_{+}\times \R: y = w+x\}=\Phi_x(\R_+\times\{0\})$. The measure $\P^{x}[B^\beta_t\in dy; \,W_t-x\in  dw]$ has a density $g^{x}_{B^\beta,W}(y,w)$
 with respect to $dy\, dw$ on $D_x\setminus \Delta_x=\Phi_x(\R\times\R^{*,+})$. 
But it gives mass to the segments of the line $\Delta_x$. Let us denote $\Phi_x^{-1}(y,w):=(\Phi^{-1}_1(y,w),\Phi^{-1}_2(y,w))$ and notice that $\Phi^{-1}_1(y,w)=y$. Let $A_1\subset\R_{+}$ and $A=\{(y,w)\in\R^2:y\in A_1,y=w+x\}\subset\Delta_x$. As $\Phi_x^{-1}(A)\subset\R_+\times\{ 0\}$ we have
$$
\begin{array}{lll}
\P^{x}[(B^\beta_t,W_t-x)\in A]&=&\P^x[(B^\beta_t,L^0_t(B^\beta))\in \Phi_x^{-1}(A)]\\
&=&\P^x[B^\beta_t\in\Phi^{-1}_1(A); L_t^0(B^\beta)=0  \, ]\\
&=&\P^x[B^\beta_t\in A_1; L_t^0(B^\beta)=0  \, ].\\
\end{array}
$$
Using this and \eqref{densseg} in \eqref{changmu} we get
$$
\begin{array}{lll}
\E_\P^x[f(B^{\beta,\mu}_t)]&=&
\int\int_{D_x\setminus\Delta_x}f(y)\exp\{\mu w-\frac{1}{2}\mu^2 t\}\P^{x}[B^\beta_t\in dy; \,W_t-x\in  dw]\\
&&+\int\int_{\Delta_x} f(y)\exp\{\mu w-\frac{1}{2}\mu^2 t\}\P^{x}[B^\beta_t\in dy; \,W_t-x\in  dw]\\
\\
&=&\int_{\R}f(y) \int_{-\infty}^{y-x} \exp\{\mu w-\frac{1}{2}\mu^2 t\}g^{x}_{B^\beta,W}(y,w)dw \,dy\\
&&+\int_{\R_{+}} f(y)\frac{1}{\sqrt{2\pi t}}\exp\{\mu (y-x)-\frac{1}{2}\mu^2 t\}\big(  \exp\{-\frac{(y-x)^2}{2t}\} -   \exp\{-\frac{(y+x)^2}{2t}\}    \big)dy.
\end{array}
$$
We now compute $ \int_{-\infty}^{y-x} \exp\{\mu w-\frac{1}{2}\mu^2 t\}g^{x}_{B^\beta,W}(y,w)dw$ with a change of variable and an integration by parts. We have for $y\geq 0$,
$$
 \int_{-\infty}^{y-x} \exp\{\mu w-\frac{1}{2}\mu^2 t\}g^{x}_{B^\beta,W}(y,w)dw=
 \frac{e^{{-\frac{1}{2}\mu^2 t}}}{\beta}\int_{-\infty}^{y-x}e^{\mu w}\dfrac{(1+\beta)(\frac{y-w-x}{\beta}+x+y)}{\sqrt{2\pi t^3}}e^{  -\frac{ (\frac{y-w -x}{\beta}+x+y)^2 }{2t}   }dw.
$$
And,
$$
\begin{array}{l}
\int_{-\infty}^{y-x}e^{\mu w}(\frac{y-w-x}{\beta}+x+y)e^{  -\frac{ (\frac{y-w-x }{\beta}+x+y)^2 }{2t}   }dw
=\beta e^{\mu(y-x)}\int_0^\infty e^{-\beta \mu w'}(w' +x +y)e^{-\frac{(w'+x+y)^2}{2t}}dw'\\
\\
=\beta e^{\mu(y-x)}\pare{te^{-\frac{(x+y)^2}{2t}} -\beta\mu t \int_0^\infty e^{-\beta\mu w' - \frac{(w'+x+y)^2}{2t}}dw'} \\
=\beta e^{\mu(y-x)}\pare{te^{-\frac{(x+y)^2}{2t}} -\sqrt{2\pi}\beta\mu t^{3/2}e^{{\frac{\beta^2}{2}\mu^2 t}}e^{\beta \mu (x+y)} N^c(\frac{x+y+t\beta\mu}{\sqrt{t}})}\\
= \beta\, t\,e^{\mu(y-x)}e^{-\frac{(x+y)^2}{2t}}\pare{1 -\sqrt{2\pi t}\beta\mu e^{\frac{(x+y+\beta\mu t)^2}{2t}} N^c(\frac{x+y+t\beta\mu}{\sqrt{t}})}
\end{array}
$$
which yields the desired result. The cases $y<0$ and $\beta < 0$ are treated in a similar way.

For the case $x<0$, we perform the change of variable $x\rightarrow -x$, $y\rightarrow -y$, $\beta\rightarrow -\beta$ and $\mu\rightarrow -\mu$.

\end{proof}	    
	    
		\section{Exact simulation of bridges of a Skew motion with drift}
\label{ss:section:bridge}

For $0<t<T$ let us denote $q^{\beta,\mu}(t,T,a,b,y)$ the probability  density of $B^{\beta,\mu}_t$ knowing that $B^{\beta,\mu}_0=a$ and $B^{\beta,\mu}_T=b$. That is to say
$$
\P[ B^{\beta,\mu}_t\in dy\,|\, B^{\beta,\mu}_0=a\, , \, B^{\beta,\mu}_T=b]=q^{\beta,\mu}(t,T,a,b,y)dy.$$

Note that with these notations, $q^{0,0}(t,T,a,b,y)$ is the probability density of $W_t$ knowing that $W_0=a$ and $W_T=b$.  As the law of the Brownian bridge is well known, sampling from $q^{0,0}(t,T,a,b,y)$ is easy.
\vspace{0.5cm}

 In order to sample along the law given by $q^{\beta,\mu}(t,T,a,b,y)$ with a rejection algorithm using Brownian bridges values as proposals, we will use the two following results.

\begin{lem}
\label{explicicte-densbridge}
Let $a,b\in\R$, $0<t<T$. 

For $(\beta,\mu)\in (-1,1)\times\R$, we have
\begin{equation}
\label{formula-densbridge}
\forall y\in\R,\quad q^{\beta,\mu}(t,T,a,b,y)  = \frac{p^{\beta,\mu}(t,a,y)p^{\beta,\mu}(T-t,y,b)}{p^{\beta,\mu}(T,a,b)}.
\end{equation}
\end{lem}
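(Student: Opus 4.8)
The formula $q^{\beta,\mu}(t,T,a,b,y) = p^{\beta,\mu}(t,a,y)\,p^{\beta,\mu}(T-t,y,b)/p^{\beta,\mu}(T,a,b)$ is just the statement that the bridge density is obtained from the Chapman--Kolmogorov factorization of the transition density, so the plan is to reduce the claim to two facts about $B^{\beta,\mu}$: that it is a (time-homogeneous) Markov process, and that it has transition density $p^{\beta,\mu}$ as computed in Proposition~\ref{densitySBMmu}. Both are available: the Markov property of solutions of \eqref{edstl} (hence of the special case \eqref{edsmu}) is recalled in Subsection~\ref{sssection-markov} with references \cite{Kulik}, \cite{Zaitseva-1}, and the explicit density $p^{\beta,\mu}(t,x,y)$ is Proposition~\ref{densitySBMmu}.

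First I would write, for any bounded continuous $f$ and $0<t<T$, the conditional expectation
$$
\E\big[f(B^{\beta,\mu}_t)\,\big|\,B^{\beta,\mu}_0=a,\,B^{\beta,\mu}_T=b\big]
$$
and express it via the joint law of $(B^{\beta,\mu}_t,B^{\beta,\mu}_T)$ under $\P^a$. By the Markov property this joint law has density $(y,z)\mapsto p^{\beta,\mu}(t,a,y)\,p^{\beta,\mu}(T-t,y,z)$ with respect to $dy\,dz$, and the marginal law of $B^{\beta,\mu}_T$ has density $z\mapsto p^{\beta,\mu}(T,a,z)$. Dividing the joint density by the marginal gives the conditional density of $B^{\beta,\mu}_t$ given the two endpoints, which is exactly the right-hand side of \eqref{formula-densbridge}; since this holds against all bounded continuous $f$, it identifies $q^{\beta,\mu}(t,T,a,b,\cdot)$ as claimed. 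One should note in passing that $p^{\beta,\mu}(T,a,b)>0$ for all $a,b\in\R$ (clear from the explicit expression in Proposition~\ref{densitySBMmu}, together with Remark~\ref{rem-dens-pos} which guarantees the bracketed factor is strictly positive), so the division and the regular conditional density are well defined.

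The only genuine subtlety — and the step I would be most careful about — is the measure-theoretic justification of conditioning on the null event $\{B^{\beta,\mu}_T=b\}$: one has to say that $q^{\beta,\mu}(t,T,a,b,y)$ is \emph{defined} as the disintegration of the joint law along the $B^{\beta,\mu}_T$-marginal, i.e. as a regular conditional density, and that this disintegration is given $dy\,dz$-a.e., hence for a.e. $b$, by the ratio above; the fact that the resulting expression in $b$ is continuous (indeed real-analytic away from $b=0$, by the form of $p^{\beta,\mu}$) then lets one fix the version so that \eqref{formula-densbridge} holds for every $b$. Everything else is the routine Markov-bridge computation and requires no properties of $B^{\beta,\mu}$ beyond time-homogeneity and the Markov property.
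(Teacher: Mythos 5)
Your proposal is correct and follows exactly the route the paper takes: the paper's proof is the one-line observation that \eqref{formula-densbridge} follows from the Markov property of solutions of \eqref{edstl} (Subsubsection \ref{sssection-markov}), and your argument is simply the standard Markov-bridge disintegration spelled out in detail, with the positivity of $p^{\beta,\mu}(T,a,b)$ correctly noted. Nothing is missing.
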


\begin{proof}

This comes from the Markov property for solutions of \eqref{edstl} (see Subsubsection \ref{sssection-markov} and the references therein).

\end{proof}

\begin{lem}
We have for all $\mu\in\R$, 
$$\forall 0<t<T,\;\forall a,b,y\in\R,\quad q^{0,\mu}(t,T,a,b,y)=q^{0,0}(t,T,a,b,y).$$
\end{lem}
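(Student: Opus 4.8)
The plan is to reduce everything to the explicit Gaussian formulas already established. First I would invoke Lemma \ref{explicicte-densbridge} with $\beta=0$ (this is allowed, since that lemma is stated for $(\beta,\mu)\in(-1,1)\times\R$, which includes $\beta=0$), obtaining
$$q^{0,\mu}(t,T,a,b,y)=\frac{p^{0,\mu}(t,a,y)\,p^{0,\mu}(T-t,y,b)}{p^{0,\mu}(T,a,b)}.$$
The same identity holds for $\mu=0$. So it suffices to compare the two ratios, and for that I would use the explicit expression \eqref{def-pmu} for $p^{0,\mu}$.

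The key observation is the elementary factorisation obtained by completing the square in \eqref{def-pmu}: since $(v-u-\mu s)^2=(v-u)^2-2\mu s(v-u)+\mu^2 s^2$, one has, for all $s>0$ and $u,v\in\R$,
$$p^{0,\mu}(s,u,v)=p^{0,0}(s,u,v)\,\exp\!\Big\{\mu(v-u)-\tfrac12\mu^2 s\Big\}.$$
Substituting this into the three occurrences of $p^{0,\mu}$ in the ratio above, the Gaussian prefactors reassemble exactly into $q^{0,0}(t,T,a,b,y)$, while the exponential correction terms contribute the single factor
$$\exp\!\Big\{\mu(y-a)-\tfrac12\mu^2 t+\mu(b-y)-\tfrac12\mu^2(T-t)-\mu(b-a)+\tfrac12\mu^2 T\Big\}=1,$$
because the linear-in-$\mu$ terms telescope, $\mu(y-a)+\mu(b-y)=\mu(b-a)$, and the quadratic ones cancel, $\tfrac12\mu^2 t+\tfrac12\mu^2(T-t)=\tfrac12\mu^2 T$. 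This yields $q^{0,\mu}(t,T,a,b,y)=q^{0,0}(t,T,a,b,y)$.

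There is essentially no obstacle: the entire content is the Cameron--Martin cancellation, which at the level of densities is just the quadratic-completion bookkeeping above. For completeness I would mention the probabilistic reading: a Brownian motion with constant drift $\mu$ is $t\mapsto W_t+\mu t$, so conditioning it to go from $a$ to $b$ on $[0,T]$ amounts to conditioning $W$ to go from $a$ to $b-\mu T$; the resulting process is $\frac{T-t}{T}a+\frac{t}{T}(b-\mu T)+(\text{standard bridge})+\mu t$, and the two terms $\mp\mu t$ cancel, leaving precisely the standard Brownian bridge from $a$ to $b$. I would present the density computation, since the explicit transition functions are already at hand from \eqref{def-pmu} and Lemma \ref{explicicte-densbridge}.
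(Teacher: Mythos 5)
Your proof is correct and follows exactly the route the paper takes: the paper's proof simply states that the lemma is a direct consequence of \eqref{def-pmu} and \eqref{formula-densbridge}, and your argument is that same deduction with the Cameron--Martin cancellation written out explicitly.
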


\begin{proof}
It is a direct consequence of Equations \eqref{def-pmu}  and \eqref{formula-densbridge}.
\end{proof}

\begin{lem}
\label{majmindens}
Let $a,b\in\R$, $0<t<T$. 

\begin{list}{--}{}
\item For $(\beta,\mu)\in (-1,1)\times\R$ with $\beta\mu\geq 0$, we have
\begin{equation}
\label{majpontpos}
\forall y\in\R,\quad q^{\beta,\mu}(t,T,a,b,y) \leq K^{\beta,\mu}_{T,a,b}\,q^{0,0}(t,T,a,b,y),
\end{equation}
where
$$
K^{\beta,\mu}_{T,a,b}:= 4 \overline{\alpha}^2\frac{p^{0,\mu}(T,a,b)}{p^{\beta,\mu}(T,a,b)}, 
$$ 
and $\overline{\alpha}:=\max(\frac{1+\beta}{2},\frac{1-\beta}{2})$. 

\item For $(\beta,\mu)\in (-1,1)\times\R$ with $\beta\mu < 0$, set 
$$
\gamma^{\beta,\mu}(t,z):=1-\beta\mu\sqrt{2\pi t}\exp(\frac{(z+t\beta\mu)^2}{2t})N^c(\frac{\beta\mu t + z}{\sqrt{t}}).
$$
Then,
\begin{equation}
\label{majdens2}
p^{\beta,\mu}(t,x,y)\leq 2\overline{\alpha}\gamma^{\beta,\mu}(t,|x|)p^{0,\mu}(t,x,y)
\end{equation}
and
\begin{equation}
\label{majdens3}
p^{\beta,\mu}(t,x,y)\leq 2\overline{\alpha}\gamma^{\beta,\mu}(t,|y|)p^{0,\mu}(t,x,y).
\end{equation}
In particular,
\begin{equation}
\label{majpontneg}
\forall y\in\R,\quad q^{\beta,\mu}(t,T,a,b,y) \leq K^{\beta,\mu}_{t,T,a,b}\,q^{0,0}(t,T,a,b,y),
\end{equation}
where
$$
K^{\beta,\mu}_{t,T,a,b}:= 4\overline{\alpha}^2\gamma^{\beta,\mu}(t,|a|)\gamma^{\beta,\mu}(T-t,|b|)\frac{p^{0,\mu}(T,a,b)}{p^{\beta,\mu}(T,a,b)}.
$$ 
\end{list}

\end{lem}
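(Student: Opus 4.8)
The plan is to reduce everything to pointwise bounds on the density $p^{\beta,\mu}(t,x,y)$ of the form $p^{\beta,\mu}(t,x,y)\le c(t,x,y)\,p^{0,\mu}(t,x,y)$, and then to feed these into the bridge identity \eqref{formula-densbridge} from Lemma \ref{explicicte-densbridge}, namely $q^{\beta,\mu}(t,T,a,b,y)=p^{\beta,\mu}(t,a,y)p^{\beta,\mu}(T-t,y,b)/p^{\beta,\mu}(T,a,b)$, together with the Brownian-bridge identity $q^{0,0}(t,T,a,b,y)=p^{0,\mu}(t,a,y)p^{0,\mu}(T-t,y,b)/p^{0,\mu}(T,a,b)$ (which holds because of \eqref{def-pmu} and the remark that $q^{0,\mu}=q^{0,0}$). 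Multiplying the two pointwise bounds — one on the interval $[0,t]$ with endpoints $a,y$, one on $[t,T]$ with endpoints $y,b$ — and dividing by $p^{\beta,\mu}(T,a,b)$ gives exactly \eqref{majpontpos} and \eqref{majpontneg} once one identifies the constant $K$; this last algebraic step is routine, so the real content is the pointwise estimates.

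For the case $\beta\mu\ge 0$: start from the explicit formula in Proposition \ref{densitySBMmu}. In each of the four sign sectors the density is $p^{0,\mu}(t,x,y)$ times something of the shape $\big(1-e^{-2|x||y|/t}\big)$ or $(1\pm\beta)\big[1-\beta\mu\sqrt{2\pi t}\exp\{(|x|+|y|+t\beta\mu)^2/2t\}N^c((\beta\mu t+|x|+|y|)/\sqrt t)\big]$. When $\beta\mu\ge 0$ the bracketed factor is $\le 1$ (the subtracted term is nonnegative), and the prefactor $1\pm\beta$ is $\le 2\overline\alpha$; also $1-e^{-2|x||y|/t}\le 1\le 2\overline\alpha$ since $\overline\alpha\ge 1/2$. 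Hence $p^{\beta,\mu}(t,x,y)\le 2\overline\alpha\,p^{0,\mu}(t,x,y)$ pointwise. Applying this on $[0,t]$ and on $[t,T]$ yields a factor $(2\overline\alpha)^2=4\overline\alpha^2$, and dividing by $p^{\beta,\mu}(T,a,b)$ while multiplying and dividing by $p^{0,\mu}(T,a,b)$ produces exactly $K^{\beta,\mu}_{T,a,b}$.

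For the case $\beta\mu<0$: now the subtracted term in the bracket is negative, so the bracket exceeds $1$, and I must control it by $\gamma^{\beta,\mu}$. Observe that the bracketed expression in Proposition \ref{densitySBMmu} is precisely $\gamma^{\beta,\mu}(t,|x|+|y|)$ in the notation of the lemma. The key monotonicity fact to establish is that $z\mapsto\gamma^{\beta,\mu}(t,z)$ is increasing in $z\ge 0$ when $\beta\mu<0$ — this follows from differentiating, using $\frac{d}{dz}N^c(z/\sqrt t)=-\frac{1}{\sqrt{2\pi t}}e^{-z^2/2t}$, and checking the sign; this differentiation is the one genuinely delicate computation. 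Granting it, and granting that $\gamma^{\beta,\mu}(t,0)\ge 1$ (immediate since $-\beta\mu>0$), we get $\gamma^{\beta,\mu}(t,|x|+|y|)\le\gamma^{\beta,\mu}(t,|x|)\cdot\gamma^{\beta,\mu}(t,|y|)$? — no; rather one bounds $\gamma^{\beta,\mu}(t,|x|+|y|)$ by $\gamma^{\beta,\mu}(t,|x|)$ alone (monotonicity, dropping $|y|\ge 0$ would go the wrong way, so instead one keeps $|x|+|y|\ge|x|$ and uses monotonicity to bound above... ) — the clean route is: $\gamma$ increasing and $|x|+|y|\ge |x|$ gives only a lower bound, so instead use that for fixed $t$ the function is also dominated after splitting, which is exactly why the statement offers the two separate bounds \eqref{majdens2} and \eqref{majdens3}: \eqref{majdens2} comes from bounding the bracket by $\gamma^{\beta,\mu}(t,|x|)$ is false in general, so one must argue $\gamma^{\beta,\mu}(t,|x|+|y|)\le\gamma^{\beta,\mu}(t,|x|)$ fails — hence the correct claim is the reverse inequality for $\gamma$, i.e. $z\mapsto\gamma^{\beta,\mu}(t,z)$ is \emph{decreasing}; re-examining, since $\beta\mu<0$, as $z\to\infty$ the Mills-ratio asymptotics give $\gamma^{\beta,\mu}(t,z)\to 1$ while $\gamma^{\beta,\mu}(t,0)>1$, so $\gamma$ is indeed \emph{decreasing}, and therefore $\gamma^{\beta,\mu}(t,|x|+|y|)\le\gamma^{\beta,\mu}(t,|x|)$ and likewise $\le\gamma^{\beta,\mu}(t,|y|)$, giving \eqref{majdens2}–\eqref{majdens3}. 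Combining the prefactor bound $1\pm\beta\le 2\overline\alpha$ with these yields both displays. Finally, feeding \eqref{majdens2} on $[0,t]$ (endpoints $a,y$, keep the factor $\gamma^{\beta,\mu}(t,|a|)$) and \eqref{majdens3} on $[t,T]$ (endpoints $y,b$, keep $\gamma^{\beta,\mu}(T-t,|b|)$) into \eqref{formula-densbridge}, the $y$-dependent $\gamma$ factors are discarded in favour of the $a$- and $b$-dependent ones, the two $2\overline\alpha$'s multiply to $4\overline\alpha^2$, and dividing by $p^{\beta,\mu}(T,a,b)$ and inserting $p^{0,\mu}(T,a,b)/p^{0,\mu}(T,a,b)$ gives $K^{\beta,\mu}_{t,T,a,b}$, proving \eqref{majpontneg}.

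The main obstacle is thus the sign analysis of $\frac{\partial}{\partial z}\gamma^{\beta,\mu}(t,z)$ establishing monotonicity in $z$ (equivalently, a Mills-ratio–type inequality for $N^c$), since everything else is either a direct substitution into Proposition \ref{densitySBMmu}, an elementary bound $1\pm\beta\le 2\overline\alpha$, or the bookkeeping of multiplying two pointwise estimates through the bridge formula \eqref{formula-densbridge}.
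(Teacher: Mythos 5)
Your plan is correct and is essentially the paper's own proof: for $\beta\mu\ge 0$ the bracket in Proposition \ref{densitySBMmu} is at most $1$ and the sector prefactors give $p^{\beta,\mu}\le 2\overline{\alpha}\,p^{0,\mu}$; for $\beta\mu<0$ the paper likewise shows that $z\mapsto\sqrt{2\pi}\,e^{z^2/2}N^c(z)$ is decreasing via the Mills-ratio inequality \eqref{majgauss}, so the bracket $\Gamma^{\beta,\mu}(t,x,y)$ is maximized at $|y|=0$ (resp.\ $|x|=0$), yielding \eqref{majdens2}--\eqref{majdens3}, and both cases are then pushed through the bridge identity \eqref{formula-densbridge} exactly as you describe, keeping the $a$-dependent factor on $[0,t]$ and the $b$-dependent one on $[t,T]$. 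The one step to tidy is the same-sign sectors (say $x,y\ge 0$), where the density is a \emph{sum} of the two pieces you list: you must bound the combined multiplier $\bigl(1-e^{-2xy/t}\bigr)+(1+\beta)e^{-2xy/t}\cdot[\cdots]\le 1+\beta e^{-2xy/t}\le\max(1,1+\beta)\le 2\overline{\alpha}$ (this is the paper's $(y-x)^2\le(y+x)^2$ step), since bounding each summand separately by $2\overline{\alpha}\,p^{0,\mu}$ would only produce a strictly larger constant.
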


\begin{proof}

{\it Case $\beta\mu\geq 0$.} 

Let  $t>0$ and $x\geq 0$. Looking at Proposition \ref{densitySBMmu} it is clear that for $y< 0$,
$$p^{\beta,\mu}(t,x,y)\leq (1+\beta)p^{0,\mu}(t,x,y).$$

For $y\geq 0$ we have
$$
\begin{array}{lll}
p^{\beta,\mu}(t,x,y)& \leq & p^{0,\mu}(t,x,y) + \frac{\beta}{\sqrt{2\pi t}}\exp\big\{ -\frac{(x+y)^2}{2t}+\mu(y-x)-\frac{1}{2}\mu^2t  \big\}\\
\\
&\leq &(1+\beta)\,p^{0,\mu}(t,x,y),\\
\end{array}
$$
where we have used $(y-x)^2\leq (y+x)^2$ (because $x,y>0$).
We can proceed in a similar way for $x<0$ and finally, we get that
\begin{equation}
\label{majdens}
\forall t>0,\;\forall x,y\in\R,\quad p^{\beta,\mu}(t,x,y) \leq  2\overline{\alpha}\,p^{0,\mu}(t,x,y).
\end{equation}

Thus, using the previous inequality gives 
\begin{equation}
\label{calcmajpont}
\begin{split}
q^{\beta,\mu}(t,T,a,b,y)  &= \frac{p^{\beta,\mu}(t,a,y)p^{\beta,\mu}(T-t,y,b)}{p^{\beta,\mu}(T,a,b)}\\
&\leq  4\overline{\alpha}^2\frac{p^{0,\mu}(T,a,b)}{p^{\beta,\mu}(T,a,b)}\frac{p^{0,\mu}(t,a,y)p^{0,\mu}(T-t,y,b)}{p^{0,\mu}(T,a,b)}\\
&\leq 4 \overline{\alpha}^2\frac{p^{0,\mu}(T,a,b)}{p^{\beta,\mu}(T,a,b)}q^{0,0}(t,T,a,b,y).
\end{split}
\end{equation}

\vspace{0.5cm}

{\it Case $\beta\mu< 0$.} 
Let us denote $\Gamma^{\beta,\mu}(t,x,y):=1 -  \beta\mu\sqrt{2\pi t} \exp\big\{ \frac{(|x|+|y|+t\beta\mu)^2 }{2t} \big\}  N^c(\frac{\beta\mu t+|x|+|y|}{\sqrt{t}})$.
For fixed $x\in\R$, $y\mapsto\Gamma^{\beta,\mu}(t,x,y)$ is an even function. As we have
\begin{equation}
\label{majgauss}
\forall z>0,\quad z\, e^{\frac{z^2}{2}}\int_z^\infty e^{-\frac{u^2}{2}}du< 1,
\end{equation} 
the function $z\mapsto \sqrt{2\pi}\exp(\frac{z^2}{2})N^c(z)$ has negative first derivative on $\R^+$. Therefore $y\mapsto\Gamma^{\beta,\mu}(t,x,y)$ is decreasing on $\R^+$
and we have $\max_{y\in\R}\Gamma^{\beta,\mu}(t,x,y)=\gamma^{\beta,\mu}(t,|x|)$.
Using this and the same kind of computations than in the previous case we get \eqref{majdens2}. As the roles of $x$ and $y$ are symmetric in 
$\Gamma^{\beta,\mu}(t,x,y)$, we get \eqref{majdens3}. We then obtain \eqref{majpontneg}, using the same computations than for \eqref{calcmajpont}.
\end{proof}

\begin{rem}
\label{sign-dens}
Note that \eqref{majgauss}  also allows to prove that $p^{\beta,\mu}(t,x,y)$ remains strictly positive (see Remark \ref{rem-dens-pos}).

\end{rem}		
		
	Lemma \ref{majmindens} suggests to use the following rejection algorithm in order to sample along $q^{\beta,\mu}(t,T,a,b,y)dy$ (see for example Proposition 1 in \cite{beskos1}).
		
		\vspace{0.3cm}
		
		\begin{center}
\hrulefill

		\textsc{  
		\begin{center}
EXACT SIMULATION ALGORITHM ALONG $q^{\beta,\mu}(t,T,a,b,y)dy$
\end{center}
\begin{enumerate}
\item Set $K=K^{\beta,\mu}_{T,a,b}$ if $\beta\mu\geq 0$, $K=K^{\beta,\mu}_{t,T,a,b}$ otherwise.
\item Sample $Y$ along $q^{0,0}(t,Ta,b,y)dy$.
\item Evaluate 
$$f(Y)\defi \frac{1}{K}\frac{q^{\beta,\mu}(t,T,a,b,Y)}{q^{0,0}(t,T,a,b,Y)}\leq 1.$$
\item Simulate $U\sim\mathcal{U}([0,1])$.
If $U\leq f(Y)$ accept the proposed value $Y$. Else return to Step 2.
\end{enumerate}  
}
	\hrulefill
	\end{center}

\section{Numerical experiments}
\label{sec-num}	

{\bf Example 1.} We first deal with a toy example. We consider the following SDE
\begin{equation}
\label{edscos}
dX_t=dW_t-\frac{\pi}{2}\cos(\frac{\pi}{5}X_t)dt+\beta dL^0_t(X),\quad X_0=x_0,
\end{equation}
with $\beta=0.6$, and $x_0=0.2$. Note that, here,  the drift $\bar{b}(x)=-\frac{\pi}{2}\cos(\frac{\pi}{5}x)$ is bounded and of class $C^\infty$ on the whole real line.

For the exact procedure  the constant drift involved in 
Subsection \ref{sec:algo_nondege}, equals $\mu=\bar{b}(0)=-\frac{\pi}{2}$. So  we will have to sample bridges of SBM with non zero drift $\mu$, using the results of Section \ref{ss:section:bridge}.

 We have first to sample $X_T$ from 
$$
\begin{array}{lll}
h(y)&=&C\exp\pare{B(y)-B(x_0)} p^{\beta,\mu}(T,x_0,y)\\
\\
&=&C\exp\pare{\frac{5}{2}\big(\sin(\frac{\pi}{5}x_0)-\sin(\frac{\pi}{5}y)\big)-\mu(y-x_0)  } p^{\beta,\mu}(T,x_0,y)\\
\end{array}$$
(Step 1 of the Exact Algorithm). This can be done by rejecting standard normal random variables with mean $x_0$. Indeed, using \eqref{majdens2}, we have here
$$
\frac{h(y)}{C}\leq 2\overline{\alpha}\gamma^{\beta,\mu}(T,|x_0|)\exp(5-\frac{\mu^2T}{2})\,p^{0,0}(T,x_0,y).$$
Then we accept or reject the proposed value $X_T$, using Steps 2 to 4 of the Exact Algorithm, with bridges of $B^{\beta,\mu}$,
$$\tilde{\phi}(x)=\frac{\pi^2}{8}\cos^2(\frac{\pi}{5}x)+\frac{\pi^2}{20}\sin(\frac{\pi}{5}x)+\frac{\pi^2}{20},$$
and $K=\frac{9\pi^2}{20}$ as un upper bound for $\tilde{\phi}$.

We plot on Figure \ref{fig1} (top and bottom figures) the approximated density obtained with $10^6$ simulations of $X_T$, sampled with our exact procedure. On the top figure we plot the approximated densities
obtained with $10^6$ simulations of the Euler Scheme used in \cite{martinez04a} and \cite{martinez06a}, for decreasing time steps. We can observe the convergence of Euler type simulations to exact ones. Note that to have the Euler scheme fitting the exact procedure we have to take a fine time step (namely 
$\Delta t=10^{-4}$). 
This is because, as shown in
\cite{martinez04a}, the rate of weak convergence of the Euler scheme in this situation is of order $(\Delta t)^{1/2-\epsilon}$, for a smooth initial condition. 


\begin{figure}
\begin{center}
\epsfig{file=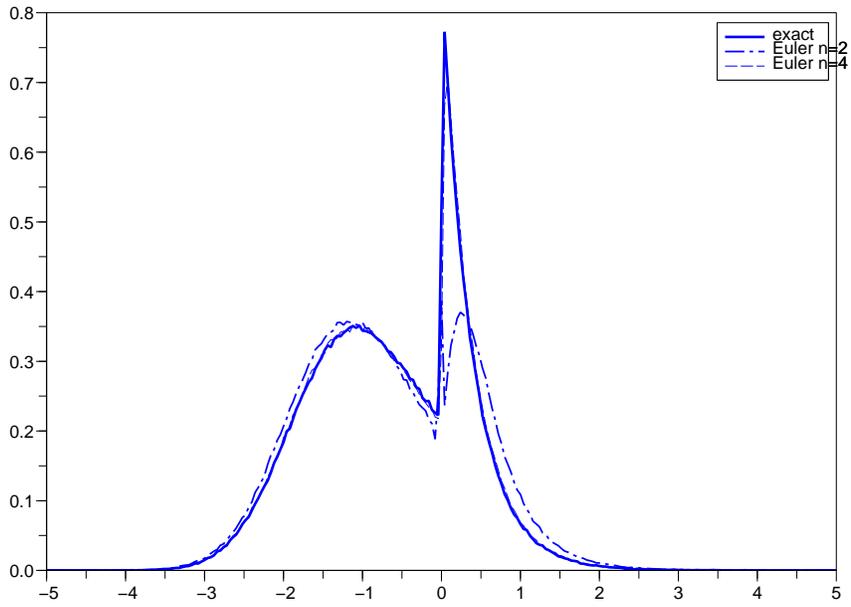, width=14cm}
\epsfig{file=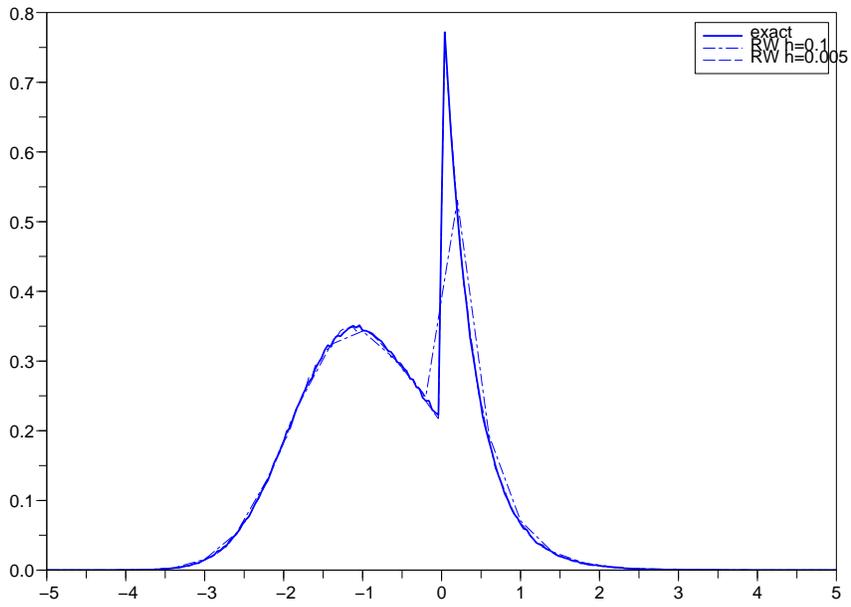, width=14cm}
\caption{ Approximated densities of the positions at time $T=1.0$ of $10^6$ paths of the solution of \eqref{edscos} starting from $x_0=0.2$: exact versus Euler with time step $\Delta t=10^{-n}$, for $n=2,4$ (top) and exact versus random walk with space steps $h=\frac{1}{10},\frac{1}{200}$ (bottom).}
\label{fig1}
\end{center}
\end{figure}

On the bottom figure the approximated density is compared with the approximated densities obtained with $10^6$ simulations
of the random walk based method studied in \cite{etore-lejay06a}, for decreasing space steps. Again we can observe the convergence of
the process with discretization error.

In Table \ref{tabrej1} we report the empirical acceptance ratios for the rejection step using $\tilde{\phi}$ and the Poisson point process in the Exact Algorithm (this corresponds to the column Exact Algorithm in the table), 
and for the rejection sampling of bridges of the SBM with drift (this is the average acceptance ratio in this case).

\begin{table}
 \begin{center}
\begin{tabular}{ccc}
\hline
 & Exact Algorithm  &  Bridges  \\
 \hline
 \\
Acceptance Ratio&  0.28   &   0.18 \\
\\
\hline
\end{tabular}
\caption{Acceptance ratios in Example 1.}
\label{tabrej1} 
\end{center}
\end{table}

In Table \ref{tab1} we report the CPU times needed to get the $10^6$ simulations, with the three different methods (and with the different discretization steps we have used).
Programs were written in C-language and executed on a personal computer equipped with an Intel Core 2 duo processor, running at $2.23$ Ghz.

On this example the exact simulation is competitive, compared to schemes with very fine grids.

\begin{table}
 \begin{center}
\begin{tabular}{ccc}
\hline
Exact & Euler  &  Random Walk  \\
& ($\Delta t=10^{-n}$, $n=2,4$)& ($h=\frac{1}{10},\frac{1}{200}$)\\
\\
\hline
239s& 17s&    3.52s\\
&  1680s& 1411s\\
\hline
\end{tabular}
\caption{CPU times for $10^6$ simulations of $X_T$.}
\label{tab1} 
\end{center}
\end{table}

\vspace{0.8cm}
{\bf Example 2.} 
 We want now to sample along the law of the continuous Markov process $X$ generated by
\begin{equation}
\label{L}
L=\frac{1}{2}\frac{\mathrm{d}}{{dx}}\big(a\frac{\mathrm{d}}{{dx}}\cdot\big)
\end{equation}
with
$$
a(x)=\left\{
\begin{array}{ll}
\frac{x^2+x+1}{(2x+1)^2}&\text{if }x\geq 0\\
\\
\frac{3x^2-x+2}{(6x-1)^2}&\text{if }x< 0.\\
\end{array}
\right.
$$
Note that $a(0+)=1\neq 2=a(0-)$. The coefficient $a(x)$ is of class $C^1$ on $\R^{*,-}$ and $\R^{*,+}$, and uniformly strictly positive and bounded, which ensures the existence of $X$; in addition  $X$ solves
\begin{equation}
\label{edsop1}
dX_t=\sqrt{a(X_t)}dW_t+\frac{a'(X_t)}{2}dt+\frac{a(0+)-a(0-)}{a(0+)+a(0-)}dL_t^0(X),
\end{equation}
(see \cite{lejay-martinez04a}, \cite{etore05a}). We define the Lamperti transformation $\Phi(x)=\int_0^xdz/\sqrt{a(z)}$ and set 
$Y_t~:=~\Phi(X_t)$. Then 
\begin{equation}
\label{edsop2}
dY_t=dW_t+\frac{1}{2}(\sqrt{a})'\circ\Phi^{-1}(Y_t)dt+\frac{\sqrt{a(0+)}-\sqrt{a(0-)}}{\sqrt{a(0+)}+\sqrt{a(0-)}}dL_t^0(Y),
\end{equation}
(this follows from Proposition 3.1 in \cite{etore05a}; see also \cite{lejay-martinez04a} and \cite{ouknine}). Firstly, note that
$\Big|\frac{\sqrt{a(0+)}-\sqrt{a(0-)}}{\sqrt{a(0+)}+\sqrt{a(0-)}}\Big|<1$. Secondly, we have
$$
(\sqrt{a})'(x)=\left\{
\begin{array}{ll}
\frac{1}{2\sqrt{x^2+x+1}}-2\frac{\sqrt{x^2+x+1}}{(2x+1)^2}&\text{if }x\geq 0\\
\\
-\frac{1}{2\sqrt{3x^2-x+1}}+6\frac{\sqrt{3x^2-x+2}}{(6x-1)^2}&\text{if }x< 0,\\
\end{array}
\right.
$$

$$
\Phi(x)=\left\{
\begin{array}{ll}
2\sqrt{x^2+x+1}-2&\text{if }x\geq 0\\
\\
-2\sqrt{3x^2-x+1}+2\sqrt{2}&\text{if }x< 0,\\
\end{array}
\right.
\hspace{0,3 cm}
\text{and}
\hspace{0,3 cm}
\Phi^{-1}(y)=\left\{
\begin{array}{ll}
\frac{-1+\sqrt{(y+2)^2-3}}{2} &\text{if }y\geq 0\\
\\
\frac{1-\sqrt{1-12[2-(\sqrt{2}-y/2)^2]}}{6}&\text{if }y< 0.\\
\end{array}
\right.
$$
As $(\sqrt{a})'(x)$ is bounded with bounded first derivative on $\R^{*,-}$ and $\R^{*,+}$, the explicitly known coefficients $\beta=\frac{\sqrt{a(0+)}-\sqrt{a(0-)}}{\sqrt{a(0+)}+\sqrt{a(0-)}}$
and $\bar{b}(y)=\frac{1}{2}(\sqrt{a})'\circ\Phi^{-1}(y)$  satisfy the assumptions of Subsubsection~\ref{assumptions}. Thus we can perform exact sampling
from \eqref{edsop2}, and, applying the exact inverse transformation $\Phi^{-1}$, get samples from \eqref{edsop1} with absolutely no discretization error.

Here we have,
$$\mu=\frac{1}{4}\frac{a'(0+)-a'(0-)}{\sqrt{a(0+)}-\sqrt{a(0-)}}  =-\frac{26}{4(1-\sqrt{2})}.$$

As we have
$$B(y)=
\left\{
\begin{array}{ll}
-\mu y+\frac{1}{2}\log(\sqrt{a}\circ\Phi^{-1}(y))&\text{if }y\geq 0\\
\\
-\mu y+\frac{1}{2}[\log(\sqrt{a}\circ\Phi^{-1}(y))-\log(\sqrt{2})]&\text{if }y< 0,\\
\end{array}
\right.
$$
we can show (using again \eqref{majdens2}), that for all $y_0,y\in\R$ and $T>0$,
$$
\exp\pare{B(y)-B(y_0)} p^{\beta,\mu}(T,y_0,y)\leq \sqrt{\sqrt{2}}\sqrt{\sqrt{24}}\,2\overline{\alpha}\gamma^{\beta,\mu}(T,|y_0|)\,e^{-\frac{1}{2}\mu^2T}p^{0,0}(T,y_0,y).$$

This allows to sample $Y_T$ from
$h(y)=C\exp\pare{B(y)-B(y_0)} p^{\beta,\mu}(T,y_0,y)$, by rejecting normal variables with mean $y_0$ and variance $T$.

We then accept or reject the proposed value $Y_T$ by using bridges of $B^{\beta,\mu}$ and
$$
\tilde{\phi}(y)=\frac{((1/2)(\sqrt{a})'\circ\Phi^{-1}(y))^2+(1/2)((\sqrt{a})''\sqrt{a})\circ\Phi^{-1}(y)}{2},$$
with
$$
(\sqrt{a})''(x)
\left\{
\begin{array}{ll}
 -\frac{2x+1}{4(x^2+x+1)^{3/2}}-\frac{1}{(2x+1)\sqrt{x^2+x+1}}+8\frac{\sqrt{x^2+x+1}}{(2x+1)^3}&\text{if }x\geq 0\\
\\
  \frac{6x-1}{4(3x^2-x+2)^{3/2}}-\frac{3}{(2x+1)\sqrt{3x^2-x+2}}+72\frac{\sqrt{3x^2-x+2}}{(6x-1)^3}&\text{if }x< 0.\\
\end{array}
\right.
$$

We take $K=\frac{(6\sqrt{2}-1/2)^2/4+(141-1/8)/2}{2}$ as an upper bound for $\tilde{\phi}$. 
We plot on Figure \ref{fig3}
the approximated density computed with $10^7$ simulations of $X_T$ for $x_0=0.0$ and $T=1$, obtained from the exact procedure. We plot on the same figure
the approximated densities obtained with the Euler scheme and the random walk approximation mentioned in Example 1.

\begin{figure}
\begin{center}
\epsfig{file=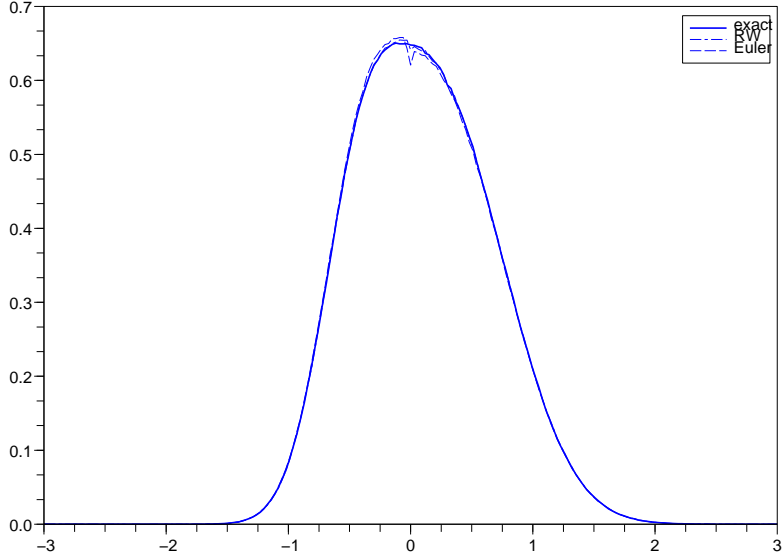,width=13cm}
\caption{DIVERGENCE FORM OPERATOR: Approximated density of the positions at time $T=1$ of $10^7$ paths of the solution of \eqref{edsop1} starting from $x_0=0.0$: exact versus random walk with space step $h=3.10^{-3}$ and Euler scheme with $\Delta t=10^{-4}$.}
\label{fig3}
\end{center}
\end{figure}

We report in Table \ref{tabrej2} the acceptance ratios.

\begin{table}
 \begin{center}
\begin{tabular}{cccc}
\hline
 & Exact Algorithm  &  Bridges  \\
 \hline
 \\
Acceptance Ratio&  0.017   &   0.5 \\
\\
\hline
\end{tabular}
\caption{Acceptance ratios in Example 2.}
\label{tabrej2} 
\end{center}
\end{table}

\begin{rem}
Note that the acceptance ratio for the algorithm in the first example in Table \ref{tabrej1} is quite low but decreases to less than $2\%$ in Table \ref{tabrej2} in the context of the second example. These figures are closely related to the measurement of the "distance" between the measure of the initial process from the reference measure and so these limitations of the algorithm arise even in the "classical" setting of the reference article \cite{beskos2} (for example with a rapidly varying drift).
Nevertheless, in terms of CPU time, the performance of the algorithm seems quite competitive, in comparison with those of discretization schemes.
\end{rem}

\begin{rem}
Note that, at least graphically and contrary to what we can see on Figure \ref{fig1}, the transition density plotted on Figure \ref{fig3} seems to be continuous at $0$~:~this matches the well-known theoretical result, which asserts that the transition density of diffusion semigroups corresponding to elliptic divergence form operator of the form \eqref{L} is always continuous. We refer to Stroock \cite{stroock} for a proof based on the self-adjoint properties of these semi-groups and Nash's inequality.
\end{rem}

\section{Discussion and concluding remarks}
\label{sec-conclu}

		\subsection{An open problem : the path decomposition of a skew Brownian bridge}
	An important issue for the extension of the initial exact simulation method is to overcome the restraining assumptions made on the drift function 
$\bar{b}$ (see Section \ref{assumption-1}) : namely, the assumption of boundedness for $\bar{b}$. 

For example, it is frustrating that these assumptions do not allow us to simulate exactly what one may call the "Skewed Ornstein-Uhlenbeck"
diffusion process. This difficulty appears even in the classical case (solutions of non skewed SDEs) and the fundamental reason is that we do not know how to simulate exactly a Poisson Point Process with $\sigma$-finite intensity on the whole space $\R_+\times\R$.

In the classical case,  where $\bar{b}$ is everywhere differentiable and no local time is involved (non skewed SDEs), this problem is solved by decomposing the trajectory of the standard Brownian bridge on $[0,T]$ w.r.t. the space-time point where it attains its maximum or both its maximum and its minimum : we refer to \cite{beskos5} for a detailed presentation of this problem in the classical setting.
 
Consequently, if one wants to overcome the restraining assumptions in Section \ref{assumption-1} concerning the drift function 
$\bar{b}$, one has to search for such kind of decompositions for (at least) the Skew Brownian Motion (not to mention the drifted Skew Brownian Motion). Up to our knowledge, no results can be found in the literature concerning this decomposition and this open problem seems difficult to us. However, we give below some insight concerning this problem thanks to an application of a theoretical result stated in  \cite{Pitman-Yor}.

Let $\tau^{\beta,\mu}_{z} : =\inf(s\geq 0~:~B^{\beta,\mu}_s = z)$. Set $u_\lambda(x ; z):={\mathbb E}^x\pare{{\rm e}^{-\lambda \tau^{\beta,\mu}_{z}}}$ which gives the Laplace transform of $\tau^{\beta,\mu}_{z}$ at $\lambda > 0$ (with $B^{\beta,\mu}_0=x$).
\begin{prop}  \text{(case $\mu = 0$)}

In the simple case where $\mu =0$, the function $u_\lambda$ is given by
\begin{equation}
\label{hitting-time-distrib}
u_\lambda(x;z) =
\left \{
\begin{array}{lll}
&\displaystyle \frac{{\rm sinh}\pare{\sqrt{2\lambda}(z-x)}}{{\rm sinh}(\sqrt{2\lambda}z)}\frac{1+\beta}{2\cosh(\sqrt{2\lambda}z)-(1-\beta){\rm e}^{-\sqrt{2\lambda}z}}\\
 &\displaystyle \hspace{0.2 cm} +\,\,\frac{{\rm sinh}(\sqrt{2\lambda}x)}{{\rm sinh}(\sqrt{2\lambda}z)}&\hspace{0.3 cm}\text{if}\hspace{0.2 cm}z\geq x >0,\\
 \\
&\displaystyle {\rm e}^{-\sqrt{2\lambda}(x-z)}&\hspace{0.3 cm}\text{if}\hspace{0.2 cm}x\geq z\geq 0,\\
&\displaystyle {\rm e}^{\sqrt{2\lambda}x}\frac{1+\beta}{2\cosh(\sqrt{2\lambda}z)-(1-\beta){\rm e}^{-\sqrt{2\lambda}z}}&\hspace{0.3 cm}\text{if}\hspace{0.2 cm}x< 0< z,\\
&\displaystyle {\rm e}^{\sqrt{2\lambda}(x-z)}&\hspace{0.3 cm}\text{if}\hspace{0.2 cm}0\geq z\geq x,\\
&\displaystyle {\rm e}^{-\sqrt{2\lambda}x}\frac{1-\beta}{2\cosh(\sqrt{2\lambda}z)-(1+\beta){\rm e}^{\sqrt{2\lambda}z}}&\hspace{0.3 cm}\text{if}\hspace{0.2 cm}z< 0< x,\\
\\
&\displaystyle\frac{{\rm sinh}\pare{\sqrt{2\lambda}(z-x)}}{{\rm sinh}(\sqrt{2\lambda}z)}\frac{1-\beta}{2\cosh(\sqrt{2\lambda}z)-(1+\beta){\rm e}^{\sqrt{2\lambda}z}}\\
&\displaystyle \hspace{0.2 cm} +\,\, \frac{{\rm sinh}(\sqrt{2\lambda}x)}{{\rm sinh}(\sqrt{2\lambda}z)}&\hspace{0.3 cm}\text{if}\hspace{0.2 cm}0> x\geq z.
\end{array}
\right .
\end{equation}
\end{prop}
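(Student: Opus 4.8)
Throughout write $\rho:=\sqrt{2\lambda}$ and $B^{\beta}:=B^{\beta,0}$. The plan is to identify $u_\lambda(\cdot\,;z)$ with a ratio of $\lambda$-harmonic functions of $B^{\beta}$, as in the classical Laplace-transform formula for one-dimensional diffusions recalled in \cite{Pitman-Yor}. The key structural fact is the \emph{transmission condition} at $0$: applying the symmetric It\^o--Tanaka formula to $dB^{\beta}_t=dW_t+\beta\,dL^0_t(B^{\beta})$ and using $\int_0^t g(B^{\beta}_s)\,dL^0_s(B^{\beta})=\tfrac{g(0+)+g(0-)}{2}L^0_t(B^{\beta})$ for the symmetric local time, one sees that whenever $f$ is bounded, $C^2$ on $\R^{*,-}\cup\R^{*,+}$, continuous at $0$, and satisfies
$$(1+\beta)f'(0+)=(1-\beta)f'(0-),$$
the local-time contributions at $0$ cancel and $f(B^{\beta}_t)-\tfrac12\int_0^t f''(B^{\beta}_s)\,ds$ is a local martingale.

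First I would build the two fundamental solutions of $\tfrac12 f''=\lambda f$ on $\R\setminus\{0\}$ subject to this condition. On each half-line the general solution is $Ae^{\rho x}+Be^{-\rho x}$; imposing boundedness at $-\infty$ fixes, up to a constant, the positive \emph{increasing} solution
$$
\psi_\lambda(x)=
\begin{cases}
e^{\rho x}, & x\le 0,\\
\dfrac{1}{1+\beta}e^{\rho x}+\dfrac{\beta}{1+\beta}e^{-\rho x}, & x\ge 0,
\end{cases}
$$
the two coefficients on $\R_+$ being pinned down by continuity and the transmission condition at $0$; symmetrically, imposing boundedness at $+\infty$ gives the positive \emph{decreasing} solution
$$
\phi_\lambda(x)=
\begin{cases}
\dfrac{1}{1-\beta}e^{-\rho x}-\dfrac{\beta}{1-\beta}e^{\rho x}, & x\le 0,\\
e^{-\rho x}, & x\ge 0.
\end{cases}
$$
(Positivity on the ``far'' half-line uses $|\beta|<1$.)

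Next I would establish the representation $u_\lambda(x;z)=\psi_\lambda(x)/\psi_\lambda(z)$ for $x\le z$ and $u_\lambda(x;z)=\phi_\lambda(x)/\phi_\lambda(z)$ for $x\ge z$. Fix $x\le z$ and apply the It\^o--Tanaka computation above to $t\mapsto e^{-\lambda(t\wedge\tau^{\beta,0}_z)}\psi_\lambda(B^{\beta}_{t\wedge\tau^{\beta,0}_z})$: the transmission condition annihilates the only local-time term (at $0$, the sole point where $\psi_\lambda$ fails to be $C^2$), while $\tfrac12\psi_\lambda''-\lambda\psi_\lambda\equiv 0$ on $\R\setminus\{0\}$ kills the bounded-variation part, so the stopped process is a bounded martingale. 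Optional stopping combined with bounded convergence, using $\lambda>0$ and the boundedness of $\psi_\lambda$ on $(-\infty,z]$ to see that the contribution of $\{\tau^{\beta,0}_z=+\infty\}$ vanishes in the limit, yields $\psi_\lambda(x)=\psi_\lambda(z)\,\E^x[e^{-\lambda\tau^{\beta,0}_z}]$; the case $x\ge z$ is identical with $\phi_\lambda$. I expect this to be the only genuinely delicate step: one must use the \emph{symmetric} It\^o--Tanaka formula and check that the transmission condition is precisely what cancels the local-time-at-$0$ term, and that the corner of the candidate function at the target level $z$ is irrelevant because $z$ is not reached before $\tau^{\beta,0}_z$.

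Finally I would specialise the ratio formula to the six regions of the statement according to the signs of $x$ and $z$ and to whether $x\le z$ or $x\ge z$, and simplify the exponentials. The elementary identities $e^{\rho z}+\beta e^{-\rho z}=2\cosh(\rho z)-(1-\beta)e^{-\rho z}$ and $e^{-\rho z}-\beta e^{\rho z}=2\cosh(\rho z)-(1+\beta)e^{\rho z}$, together with the addition formula for $\sinh$, turn the two ``interior'' cases ($z\ge x>0$ and $0>x\ge z$) into the displayed sums of $\sinh$-ratios, while the remaining four cases come out directly as single exponentials. As an alternative to the first three steps, one may simply invoke the general one-dimensional-diffusion Laplace-transform formula of \cite{Pitman-Yor} once the scale function $s(x)=\tfrac{x}{1+\beta}\id{x\ge 0}+\tfrac{x}{1-\beta}\id{x<0}$ and the speed measure of $B^{\beta}$ have been identified.
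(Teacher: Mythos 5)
Your proof is correct and rests on the same characterization as the paper's: the paper's (sketched) argument is to verify that the displayed formulas solve Dynkin's problem $\tfrac12 u''=\lambda u$, $u(z;z)=1$, subject to the transmission condition $(1+\beta)u'(0+)=(1-\beta)u'(0-)$, which is exactly your fundamental-solution construction run in the forward direction. You additionally supply the symmetric It\^o--Tanaka/optional-stopping argument identifying that solution with $\E^x[e^{-\lambda\tau^{\beta,0}_z}]$, a step the paper leaves implicit.
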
 
\begin{rem}
Note that if $\beta =0$, we retrieve after easy computations the well known result that gives the Laplace Transform of the law of the hitting time of $z$ by a standard Brownian Motion starting from $x$.  
\end{rem}
\begin{proof}
We only sketch the proof. The different cases may be easily conjectured from the description of the excursion measure for the SBM $(B_s^{\beta,0})_{s\geq 0}$ and the known facts concerning the standard Brownian Motion (decomposition of the different cases when a skew Brownian Motion reaches $z$ starting from $x$). In order to
check rigorously the validity of the result, one may verify that the formulas (\ref{hitting-time-distrib}) yield a solution of Dynkin's problem associated to the generator of $(B_s^{\beta,0})_{s\geq 0}$ namely~:~
\begin{equation}
\left \{
\begin{array}{l}
\displaystyle \frac{1}{2}\frac{d^2 }{dx^2}u_\lambda(.;z)= \lambda u_\lambda(.;z)\\
\displaystyle u_\lambda(z;z)=1,
\end{array}
\right .
\end{equation}
with
$$u_\lambda(.;z) \in \{g\in C^0({\mathbb R})\cap C^2\pare{(-\infty,0)\cup (0,\infty)}~:~(1+\beta) g'(0+) = (1-\beta)g'(0-)\}.$$

\end{proof}

A scale function $s$ and the corresponding integrated speed measure $m$ of a Skew Brownian Motion are given by
$$
s(x)=\left \{
\begin{array}{l}
\frac{2}{\beta + 1}x\hspace{0.3 cm}\text{ if }x\geq 0\\
\frac{2}{1-\beta}x\hspace{0.3 cm}\text{ if }x< 0
\end{array}
\right .
;\hspace{0.5 cm}
m(x)=\left \{
\begin{array}{l}
(\beta + 1)x\hspace{0.3 cm}\text{ if }x\geq 0\\
(1-\beta)x\hspace{0.3 cm}\text{ if }x< 0.
\end{array}
\right .
$$
(see \cite{lejay-2006}).
In particular, the density $\ell^{\beta,0}(t,x,y)dy$ of the SBM w.r.t. the speed measure $m(dy)$ is given by
$$\ell^{\beta,0}(t,x,y)=
\left \{
\begin{array}{lll}
&\displaystyle\frac{1}{(1+\beta)\sqrt{2\pi t}}\big(  \exp\{ -\frac{(y-x)^2}{2t} \}  - \exp\{ -\frac{(y+x)^2}{2t} \}  \big)\\
&\displaystyle+\frac{1}{\sqrt{2\pi t}}\exp\big\{ -\frac{(x+y)^2}{2t} \big\},\text{ if }x>0,y>0 ;\\
&\displaystyle\frac{1}{\sqrt{2\pi t}}\exp\big\{ -\frac{(x - y)^2}{2t} \big\},\text{ if }x>0,y<0\text{ or if }x<0,y>0 ; \\
&\displaystyle\frac{1}{(1-\beta)\sqrt{2\pi t}}\big(  \exp\{ -\frac{(y-x)^2}{2t} \}  - \exp\{ -\frac{(y+x)^2}{2t} \}  \big)\\
&\displaystyle+\frac{1}{\sqrt{2\pi t}}\exp\big\{ -\frac{(x + y)^2}{2t}\big \},\text{ if }x<0,y<0.
\end{array}
\right .
$$
(of course $\ell^{\beta,0}(t,x,y) =\ell^{\beta,0}(t,y,x)$).

Let
$$
M^{\beta,0}_T:=\sup_{0\leq s\leq t}B^{\beta, 0}_s~;~\hspace{0.3 cm}\rho^{\beta,0}_T:=\inf\{s\geq 0~:~B^{\beta,0}_s=M^{\beta,0}_s\}.
$$
Then, applying the results of Theorem 2 in Pitman-Yor  \cite{Pitman-Yor}, we have the following proposition~:
\begin{prop}
\hfill
\begin{enumerate}
\item 
For any $a,b\leq z<\infty$, $\lambda > 0$, we have that
\begin{equation}
{\mathbb E}\pare{{\rm e}^{-\lambda\rho^{\beta,0}_T}\id{M^{\beta,0}_T\in dz}\,\,~\,|\,~B^{\beta, 0}_0 = a, B^{\beta, 0}_T = b} = 
\frac{u_\lambda(a;z)u_\lambda(z;b)}{\ell^{\beta,0}(T,a,b)}s(dz).
\end{equation}
\item Moreover, under ${\mathbb P}\pare{.\,\,~|~\,B^{\beta, 0}_0 = a, B^{\beta, 0}_T = b, M^{\beta,0}_T = z, \rho^{\beta,0}_T = u}$, the path fragments
$$
\pare{B^{\beta, 0}_s~:~0\leq s\leq u}\hspace{0.4 cm}\pare{B^{\beta, 0}_{T-s}~:~0\leq s\leq T-u}
$$
are independent, distributed respectively like
$$
\pare{B^{\beta, 0}_s~:~0\leq s\leq \tau_z^{\beta, 0}}\hspace{0.2 cm}\text{under}\hspace{0.2 cm}{\mathbb P}\pare{.\,\,~|~\,B^{\beta, 0}_0 = a}\hspace{0.2 cm}\text{given}\hspace{0.2 cm}\tau_z^{\beta, 0} = u
$$
and
$$
\pare{B^{\beta, 0}_s~:~0\leq s\leq \tau_z^{\beta, 0}}\hspace{0.2 cm}\text{under}\hspace{0.2 cm}{\mathbb P}\pare{.\,\,~|~\,B^{\beta, 0}_0 = b}\hspace{0.2 cm}\text{given}\hspace{0.2 cm}\tau_z^{\beta, 0} = T-u.
$$
\end{enumerate}
\end{prop}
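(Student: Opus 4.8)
The plan is to obtain both assertions as a direct application of Theorem~2 of Pitman--Yor \cite{Pitman-Yor}, which describes the decomposition of the bridge of a regular one-dimensional diffusion at the space-time location of its maximum. No genuinely new computation is needed; what has to be done is to check that $B^{\beta,0}$ fits the framework of that theorem and to identify the data entering Pitman--Yor's formulas, namely the scale function, the speed measure, the transition density with respect to the speed measure, and the hitting-time Laplace transforms.

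First I would record that $(B^{\beta,0}_s)_{s\ge 0}$ is a regular, continuous, strong Markov diffusion on $\R$ with natural boundaries at $\pm\infty$, scale function $s$ and speed measure $m(dy)$ as written above (see \cite{lejay-2006}), and that $\{0\}$ is a regular interior point of its state space: this last point follows from the construction recalled in Subsection~\ref{subsec:random-flipping}, since there $L^0_t(B^{\beta,0})=L^0_t(|B|)=L^0_t(B)$ is a genuine, non-sticky local time. I would also note that the transition density of $B^{\beta,0}$ with respect to $m(dy)$ is $\ell^{\beta,0}(t,x,y)$, which is symmetric in $(x,y)$ as stated. These are exactly the hypotheses under which Theorem~2 of \cite{Pitman-Yor} applies.

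Next I would identify the hitting times. For $z\ge x$ the quantity $u_\lambda(x;z)=\E^x\!\left(e^{-\lambda\tau^{\beta,0}_z}\right)$ is precisely the upward hitting-time transform occurring in Pitman--Yor's decomposition, and its explicit value is furnished by the preceding Proposition. With this, assertion~(1) is Pitman--Yor's identity for the joint law of $(M^{\beta,0}_T,\rho^{\beta,0}_T)$ under the bridge measure, the transition density there being $\ell^{\beta,0}(T,a,b)$ and the reference measure in $z$ being the scale differential $s(dz)$; assertion~(2) is their companion statement that, conditionally on $\{B^{\beta,0}_0=a,\ B^{\beta,0}_T=b,\ M^{\beta,0}_T=z,\ \rho^{\beta,0}_T=u\}$, the pre-$\rho$ fragment and the time-reversed post-$\rho$ fragment are independent and are distributed as $B^{\beta,0}$ killed at $\tau^{\beta,0}_z$, started at $a$ (resp.\ at $b$) and conditioned on $\tau^{\beta,0}_z=u$ (resp.\ $\tau^{\beta,0}_z=T-u$); the symmetry of $\ell^{\beta,0}$ is what makes the reversal legitimate.

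The step requiring the most care is not a computation but a bookkeeping check: a scale function is defined only up to an affine change and the speed measure must be normalised correspondingly, so one must verify that the particular normalisations chosen above for $s$, $m$, $\ell^{\beta,0}$ and the $u_\lambda(\cdot\,;z)$ are mutually consistent, so that the constants cancel correctly in $u_\lambda(a;z)u_\lambda(z;b)/\ell^{\beta,0}(T,a,b)$ and this expression is an honest probability density in $z$ against $s(dz)$. One should also confirm that, because $B^{\beta,0}$ is non-degenerate, the bridge satisfies $M^{\beta,0}_T>\max(a,b)$ almost surely, so that the law of $M^{\beta,0}_T$ carries no atom at $\max(a,b)$ and the statement is valid as written for all $a,b\le z<\infty$.
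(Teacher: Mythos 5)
Your proposal follows exactly the route the paper takes: the proposition is presented there as a direct application of Theorem~2 of Pitman--Yor \cite{Pitman-Yor}, with the scale function $s$, speed measure $m$, density $\ell^{\beta,0}$ with respect to $m$, and the hitting-time Laplace transforms $u_\lambda(\cdot\,;z)$ identified beforehand as the required inputs. Your additional remarks on verifying regularity of the diffusion and the mutual consistency of the normalisations of $s$, $m$ and $\ell^{\beta,0}$ are sound bookkeeping that the paper leaves implicit.
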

An open problem is to find a description of these laws and to give a procedure in order to simulate these laws exactly.

		\subsection{Concluding remarks}
In this paper we presented an extension of the exact simulation method of \cite{beskos2} that permits to sample an exact skeleton
of a one dimensional diffusion process skewed at $0$. This method may be applied to diffusions related to strongly elliptic divergence form operators that possess
a discontinuous coefficient at $0$. The basic idea of this contribution depends highly  on the possibility to perform a Girsanov transformation such that no local time appears in the Girsanov exponential weight and such that the reference measure is tractable.

In our opinion, this first work should be extended in several directions.

Firstly, it is necessary to give a complete treatment of the case $\beta=0$ (see Remark \ref{rem-betaneqzero}). 
In this case, there still exists a way to perform a Girsanov transformation such that no local time appears in the Girsanov exponential weight, but then  the reference measure becomes that of a Brownian motion with two-valued drift  (see \cite{kara} for an introduction to these particular types of Brownian motions). As before, the difficulty arises for the simulation of the bridges in the Step 3 of the algorithm.

Further digging shows that, in this particular situation, the solution of the exact simulation problem in the manner of \cite{beskos2} is closely related to the computation of joint laws for the position together with local and occupation times by an arbitrary Brownian bridge (with no drift) but conditioned on its final position and local time at $0$. 
Even if there exists abundant litterature dealing with the Brownian bridge, there is no result for such joint laws. 

Consequently, we believe that performing a totally exact simulation algorithm in full generality in the case $\beta=0$ appears to be outside the scope of this paper. Note that a satisfactory treatment of the case $\beta=0$ is crucial if one has the objective to deal with the even more general case, where the discontinuity of $\overline{b}$ and the local time appear at distinct space points. 

\medskip

Secondly, various questions arise in the treatment of "skewness"~:~how can we overcome the restraining boundedness assumption on the drift function $\bar{b}$ ? What about a one dimensional diffusion process skewed at a finite number of points ?

\section*{Acknowledgements}
We would like to thank the anonymous referee for helpful comments and having pointed the reference \cite{Zaitseva-2}.

\bibliographystyle{plain}
\bibliography{simple_sim_exact.bib}
\end{document}